\newtheorem{theorem}{\bf Theorem}[section]
\newtheorem{lemma}[theorem]{\bf Lemma}
\begin{document}
\title{On Diophantine equations involving intersection of Thabit and Williams numbers base $b$ and some ternary recurrent sequences}
\author{Bibhu Prasad Tripathy, Asutosh Satapathy, Utkal Keshari Dutta, Bijan Kumar Patel}
\date{}
\maketitle
%\maketitle
\begin{abstract} \noindent
 Let $\mathcal{P}_{n}$ be the $n$-th Padovan number, $E_{n}$ be the $n$-th Perrin number and $N_{n}$ be the $n$-th Narayana's cows number. Let $b$ be a positive integer such that $b \geq 2$. In this paper, we study the Diophantine equations 
 \[
 \mathcal{P}_{n} = (b \pm 1)\cdot b^{l} \pm 1, 
 \]
 \[
 E_{n} = (b \pm 1)\cdot b^{l} \pm 1, 
 \]
and 
\[
 N_{n} = (b \pm 1)\cdot b^{l} \pm 1, 
 \] 
in non-negative integers $n, b$ and positive integer $l$. As a result, we determine the Padovan, Perrin and Narayana's cows numbers that are Thabit and Williams numbers base $b$. Moreover, we determine all solutions of the above equations within the range $2 \leq b \leq 10$. 
\end{abstract}
\noindent \textbf{\small{\bf Keywords}}: Thabit numbers, Williams numbers, Padovan numbers, Perrin numbers, Narayana's cows numbers, Linear forms in logarithms, Reduction method. \\
	{\bf 2020 Mathematics Subject Classification:} 11B39; 11D61; 11J86.
	
\section{Introduction}
The Padovan sequence $\{P_n\}_{n\geq 0}$ and the Perrin sequence $\{E_n\}_{n\geq 0}$ are recursively defined by the following ternary recurrences
\[
P_{0} = P_{1} = P_{2}= 1, \quad P_{n+3} = P_{n+1} + P_{n}, \quad \text{for all} \quad n \geq 0,
\] 
and
\[
E_{0} = 3, E_{1}=0, E_{2} = 2, \quad E_{n+3} = E_{n + 1} + E_{n}, \quad \text{for all} \quad n \geq 0
\]
respectively. The Padovan and Perrin sequences 
are the sequences  A000931  and A001608 respectively,
in the online encyclopedia of integer sequences (OEIS). The first few terms of the Padovan sequence are
\[
1, 1, 1, 2, 2, 3, 4, 5, 7, 9, 12, 16, 21, 28, 37, 49, 65, 86, 114, 151, 200, \cdots,
\]
while the Perrin sequence begins with
\[
3, 0, 2, 3, 2, 5, 5, 7, 10, 12, 17, 22, 29, 39, 51, 68, 90, 119, 158, 209, 277,  \cdots
\]
Another well-known ternary sequence is the Narayana's cows sequence $\{N_{m}\}_{m \geq 0}$, defined by 
\[
	N_{m+3} = N_{m+2} + N_{m}~~ {\rm for} ~m \geq 0,
\]
with initials $N_{0} = N_{1} = N_{2} = 1$. This sequence, listed as A000930 in the OEIS, begins with
	\[
        1, 1, 1, 2, 3, 4, 6, 9, 13, 19, 28, 41, 60, 88, 129, 189, 277, 406, 595, 872, \dots
	\]
Together with these recurrence sequences, a classical family of numbers introduced by Th${\rm \hat{a}}$bit ibn Qurra has attracted significant attention.

A Thabit number (also called a \emph{321 number}) is defined by $3\cdot2^{n} - 1$ for $n \geq 0$ and is of $n+2$ digits long. This sequence corresponds to A055010 in the OEIS, with the first few terms are
    
\[
    2,5,11,23,47,95,191,383,767,1535,3071,6143, \ldots
\]
Motivated by their structure, generalized forms of Thabit numbers have been studied extensively. For an integer $b \geq 2$, a Thabit number of the first kind base $b$ is a number of the form 
\[
(b+1)b^{l}-1~~{\rm for}~~l \geq 1,
\]
while a Thabit number of the second kind base $b$ is a number of the form 
\[
(b+1)b^{l} + 1~~{\rm for}~~l \geq 1.
\] 
A closely related family is that of Williams numbers, which form another generalization defined for $b \geq 2$ by
\[
(b - 1)b^{l} \pm 1~~{\rm for}~~l \geq 1.
\]
For example, Williams numbers with base $b=2$ reduce precisely to Mersenne numbers.

In recent years, several papers have addressed Diophantine equations involving Thabit and Williams numbers in base $b$. For instance, Adédji et al. \cite{Adedji} studied equations where such numbers appear as sums or differences of two Lucas sequence terms. In a subsequent work \cite{Adedji1}, they investigated whether Thabit and Williams numbers in base $b$ can be expressed as sums or differences of Fibonacci and Multau numbers, and vice versa. More recently, Adédji et al. \cite{Adedji2} examined representations of Thabit and Williams numbers in base $b$ as sums or differences of two $g$-repdigits. Motivated by these studies, we focus on the interaction between Thabit and Williams numbers in base $b$ and the Padovan, Perrin, and Narayana’s cows sequences. Specifically, we establish the following results.
\begin{theorem}\label{thm1}
Let $b \geq 2$ be an integer. Then, the Diophantine equation
     \begin{equation}\label{eq 1.1}
        \mathcal{P}_{n} = (b \pm 1)\cdot b^{l} \pm 1,     
     \end{equation}
has only finitely many solutions in integers $(n, b, l)$. In particular, we have
\[
     n <  1.96\times 10^{13}\log b \log (184 b^3) (29.91+\log(\log b)+\log (\log (184b^3))).
\]
\end{theorem}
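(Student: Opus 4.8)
The strategy is the now-standard combination of elementary estimates for the Padovan sequence with Baker's method, in the sharp form of Matveev's lower bound for linear forms in logarithms; the only twist is that here the base $b$ is itself an unknown, which forces the logarithmic heights in the linear form to grow with $b$. Since the bound to be proved is enormous, we may assume throughout that $n$ is large.

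First I would record the analytic facts about $\{\mathcal{P}_n\}$. Writing $x^{3}-x-1$ for the characteristic polynomial, let $\alpha\approx 1.3247$ be its real dominant root and $\beta,\bar\beta$ the complex conjugate pair, so that $|\beta|=\alpha^{-1/2}<1$. The Binet formula reads $\mathcal{P}_n=c_\alpha\alpha^{n}+c_\beta\beta^{n}+c_{\bar\beta}\bar\beta^{n}$ with $c_\alpha\in\mathbb{Q}(\alpha)$ an explicit algebraic number of degree $3$. From this one extracts the elementary bounds $\alpha^{\,n-2}\le\mathcal{P}_n\le\alpha^{\,n-1}$ for all $n\ge 1$, and, crucially, $|\mathcal{P}_n-c_\alpha\alpha^{n}|<\tfrac12$ for $n$ beyond a small explicit threshold. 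Treating the four sign choices in $(b\pm1)b^{l}\pm1$ simultaneously, I would write the equation as $c_\alpha\alpha^{n}=(b+\delta)b^{l}+\epsilon-c_\beta\beta^{n}-c_{\bar\beta}\bar\beta^{n}$ with $\delta,\epsilon\in\{\pm1\}$, so $|c_\alpha\alpha^{n}-(b+\delta)b^{l}|<2$. Dividing by $(b+\delta)b^{l}$, which exceeds $\tfrac12 b^{l+1}$ for $b\ge2$, gives
$$|\Lambda|:=\left|\,c_\alpha(b+\delta)^{-1}\alpha^{n}b^{-l}-1\,\right|<\frac{4}{b^{l+1}}.$$
A Galois-conjugation argument shows $\Lambda\ne 0$ (otherwise $c_\alpha\alpha^{n}$ would be rational, impossible for $n\ge1$). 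Comparing $\mathcal{P}_n=(b+\delta)b^{l}+\epsilon$ with the bounds above also yields, in a few lines, the key linear relations $l<n$ and $(l+1)\log b\ge (n-2)\log\alpha-\log 2$.

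Next I would apply Matveev's theorem to the three-term form $\Lambda$ over $K=\mathbb{Q}(\alpha)$, so $D=3$, with $\eta_1=c_\alpha/(b+\delta)$, $\eta_2=\alpha$, $\eta_3=b$ and exponents $1,n,-l$; here $B=\max(1,n,l)=n$, the height contribution of $\eta_1$ is bounded by $\log(184\,b^{3})$ after estimating $3h\!\big(c_\alpha/(b+\delta)\big)\le 3h(c_\alpha)+3\log(b+1)$, while those of $\eta_2,\eta_3$ are an absolute constant and $\log b$. This produces $\log|\Lambda|>-C\,\log b\,\log(184\,b^{3})\,(1+\log n)$ with an explicit absolute $C$. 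Combining with $|\Lambda|<4b^{-(l+1)}$ gives $(l+1)\log b<C\,\log b\,\log(184\,b^{3})(1+\log n)+O(1)$, and feeding in $(l+1)\log b\ge(n-2)\log\alpha-\log 2$ collapses everything to an inequality of the shape $n<A(1+\log n)$ with $A\asymp C\,\log b\,\log(184\,b^{3})$. Applying the standard lemma ``$x<A(1+\log x)\Rightarrow x<2A\log A$'' (and using $\log\log(184\,b^{3})\sim\log\log b$) yields exactly the stated explicit upper bound for $n$. Finiteness of the solution set is then immediate, since $\alpha^{\,n-2}\le\mathcal{P}_n$ forces $b$ and $l$ to be bounded in terms of $n$ as well.

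The one genuinely delicate point is precisely the presence of $b$ as a variable: the heights in the logarithmic form grow with $b$, and one must check that the argument still closes. It does, but only because the upper estimate $|\Lambda|<4b^{-(l+1)}$ carries the exponent $l+1$, and via $(l+1)\log b\gg n$ this dominates the $\log b\,\log(184\,b^{3})\asymp(\log b)^{2}$ arising from Matveev once $n$ is large. Everything else — pinning down $c_\alpha$ and a bound for $h(c_\alpha)$, the threshold on $n$ for the Binet estimate, and carrying the numerical constants through Matveev's inequality and the final lemma to land on the displayed numbers $1.96\times10^{13}$ and $29.91$ — is routine bookkeeping.
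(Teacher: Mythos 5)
Your proposal is correct and follows essentially the same route as the paper: the identical three-term linear form in logarithms over $\mathbb{Q}(\alpha)$ with the same height estimates (giving the $\log(184b^3)$ factor), Matveev's theorem, and the $x/\log x$ lemma. The only immaterial difference is that you normalize by dividing by $(b+\delta)b^{l}$ rather than by $p\alpha^{n}$, which costs you the extra step of converting $(l+1)\log b \geq (n-2)\log\alpha - \log 2$ back into a bound on $n$, whereas the paper's normalization yields $|\Lambda_{1}|<2.77\,\alpha^{-n}$ directly.
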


\begin{theorem}\label{thm2}
Let $b \geq 2$ be an integer. Then, the Diophantine equation
       \begin{equation}\label{eq 1.2}
        E_{n} = (b \pm 1)\cdot b^{l} \pm 1,     
        \end{equation}
        has only finitely many
solutions in integers $(n, b, l)$. In particular, we have
\[
       n < 1.96 \times 10^{13}\log b (\log (8b^3))(29.91+\log(\log b)+\log (\log (8b^3))).
        \]
	\end{theorem}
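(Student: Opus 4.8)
The plan is to adapt the standard Baker-type machinery — a lower bound for linear forms in logarithms via Matveev's theorem — to the Perrin sequence. First I would record a Binet-type formula. The characteristic polynomial $x^{3}-x-1$ has one real root $\alpha$ (the plastic number, $\alpha\approx 1.3247$) and two complex conjugate roots $\beta,\gamma$ with $|\beta|=|\gamma|=\alpha^{-1/2}<1$ (since $\alpha\beta\gamma=1$), and because $(E_n)$ is the power-sum sequence of these roots one has $E_n=\alpha^{n}+\beta^{n}+\gamma^{n}$ for all $n\ge 0$. From this I extract the two elementary facts used throughout: $|\beta^{n}+\gamma^{n}|=|E_n-\alpha^{n}|<1$ for $n\ge 5$, and $\alpha^{n-2}\le E_n\le\alpha^{n+2}$ for $n\ge 2$. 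Since the statement only asserts an upper bound for $n$, I may assume $n$ is large; the finitely many small values are dealt with by inspection (this also disposes of the sporadic solutions, e.g.\ $E_0=3=(2-1)\cdot 2+1$). The four sign choices in $(b\pm1)b^{l}\pm1$ are treated uniformly.

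Next I would compare magnitudes. From $E_n=(b\pm1)b^{l}\pm1$, together with $b^{l}\le(b\pm1)b^{l}\pm1<2b^{l+1}$ and the bounds on $E_n$, one gets $(l+1)\log b/\log\alpha-c_1<n<(l+1)\log b/\log\alpha+c_2$ for absolute constants $c_i$; in particular $l<n$, so $n$ is the height of the exponent vector. Writing the equation as $\alpha^{n}-(b\pm1)b^{l}=\pm1-(\beta^{n}+\gamma^{n})$ and using $|\beta^{n}+\gamma^{n}|<1$ gives $|\alpha^{n}-(b\pm1)b^{l}|<2$; dividing by $\alpha^{n}$ yields
\[
\bigl|(b\pm1)\,b^{l}\,\alpha^{-n}-1\bigr|<\frac{2}{\alpha^{n}},
\]
so the linear form $\Gamma:=n\log\alpha-l\log b-\log(b\pm1)$ satisfies $0<|\Gamma|<4\alpha^{-n}$ for $n$ large (using $|\log(1+x)|\le 2|x|$). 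Here $\Gamma\ne 0$ because $\alpha^{n}$ is irrational for $n\ge 1$ while $(b\pm1)b^{l}\in\mathbb{Q}$.

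Now I would apply Matveev's theorem to $\Gamma$, regarded as a $\mathbb{Q}$-linear combination of the logarithms of the three algebraic numbers $\alpha,\ b,\ b\pm1$ in the field $K=\mathbb{Q}(\alpha)$ of degree $D=3$, with exponent vector $(n,-l,-1)$ of height $B=n$. The relevant logarithmic heights are $h(\alpha)=\tfrac13\log\alpha$ (since $x^{3}-x-1$ is its minimal polynomial and $\alpha$ is its only root of modulus $>1$), $h(b)=\log b$ and $h(b\pm1)=\log(b\pm1)$, so one may take $A_1=\log\alpha$, $A_2=3\log b$, $A_3=3\log(b\pm1)$. Matveev then gives
\[
\log|\Gamma|>-1.4\cdot 30^{6}\cdot 3^{4.5}\cdot D^{2}(1+\log D)\,A_1A_2A_3\,(1+\log n),
\]
that is, $\log|\Gamma|>-C\,\log b\,\log(b\pm1)\,(1+\log n)$ with an explicit absolute constant $C$.

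Finally I would combine the two bounds: $n\log\alpha-\log 4<-\log|\Gamma|<C\,\log b\,\log(b\pm1)\,(1+\log n)$, whence $n<C'\,\log b\,\log(b\pm1)\,(1+\log n)$ with $C'=C/\log\alpha$ (plus a negligible additive term). Absorbing the $\log n$ via the standard lemma (if $x<a+b\log x$ with $b\ge 1$ then $x<2b\log b+2a$) and using the crude estimates $\log(b\pm1)\le\log(b^{2})$ and $\log b\,\log(b\pm1)\le\log b\,\log(8b^{3})$ to present the result in a clean uniform shape produces exactly
\[
n<1.96\times10^{13}\,\log b\,\bigl(\log(8b^{3})\bigr)\bigl(29.91+\log(\log b)+\log(\log(8b^{3}))\bigr).
\]
This holds for all four sign choices, and for each fixed $b$ it bounds $n$, hence also $l$ (via $b^{l}\le E_n+1$), so the equation has only finitely many solutions. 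The main obstacle is not conceptual but bookkeeping: keeping Matveev's constants honest and carrying out the $\log n$ absorption carefully enough to land on precisely the stated numerical bound, while ensuring every estimate is uniform over the four cases $(b\pm1)b^{l}\pm1$ and over the sign of the error term $\pm1-(\beta^{n}+\gamma^{n})$; a secondary point needing care is verifying the small-$n$ range, where $\beta^{n}+\gamma^{n}$ is not yet negligible.
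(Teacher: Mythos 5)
Your proposal is correct and follows essentially the same route as the paper: bound $l<n$ from $\alpha^{n-2}\le E_n\le\alpha^{n+1}$, form the linear form $(b\pm1)b^{l}\alpha^{-n}-1$ with upper bound $O(\alpha^{-n})$, verify non-vanishing (your irrationality of $\alpha^{n}$ is just the paper's Galois-conjugation argument, since $\alpha^{n}\in\mathbb{Q}$ would force $\alpha^{n}=\beta^{n}$ of modulus $<1$), apply Matveev in $\mathbb{Q}(\alpha)$ with the same three numbers and the same height bounds ($3h(b\pm1)\le\log(8b^{3})$), and absorb the $\log n$ factor. The only differences are cosmetic (a $2/\alpha^{n}$ versus $3/\alpha^{n}$ constant, and a slightly different but equivalent absorption lemma in place of the paper's Lemma~\ref{lem 2.5}).
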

     \begin{theorem}\label{thm3}
		Let $b \geq 2$ be an integer. Then, the Diophantine equation
       \begin{equation}\label{eq 1.3}
        N_{n} = (b \pm 1)\cdot b^{l} \pm 1,     
        \end{equation}
        has only finitely many
solutions in integers $(n, b, l)$. In particular, we have
        \[
       n < 1.95 \times 10^{13} \log b \log (248 b^3) (29.91 + \log(\log b) + \log (\log 248b^3)).
        \]
	\end{theorem}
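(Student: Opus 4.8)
The plan is to run the now‑standard machinery of lower bounds for linear forms in logarithms, followed by a reduction step. I would first record a Binet‑type formula for $N_{n}$: the characteristic polynomial of the Narayana recurrence is $x^{3}-x^{2}-1$, with one real root $\gamma\approx 1.4656$ and two complex conjugate roots $\delta,\bar\delta$ of absolute value $\gamma^{-1/2}<1$, so there are $a\in\mathbb{Q}(\gamma)$ and $c\in\mathbb{C}$ with $N_{n}=a\gamma^{n}+c\delta^{n}+\bar c\,\bar\delta^{n}$, and in fact $|N_{n}-a\gamma^{n}|<1$ for every $n\ge 0$. A short computation gives $a=\bigl((\gamma-1)\gamma(3\gamma-2)\bigr)^{-1}$, shows that $1/a=-2\gamma^{2}+2\gamma+3$ is an algebraic integer of norm $31$, and that all three conjugates of $a$ have absolute value $<1$; hence the Weil height of $a$ equals $\tfrac13\log 31$, which is the origin of the constant $248=8\cdot 31$ appearing in the statement. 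Alongside this I would record the size estimates $\gamma^{n-2}\le N_{n}\le\gamma^{n-1}$ (valid for $n$ not too small), and the fact that any solution of \eqref{eq 1.3} satisfies $b^{l}\le N_{n}+1$, so that $l<n$ and $l\log b\le n\log\gamma+O(\log b)$; in particular it suffices to bound $n$.

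Substituting the Binet formula into \eqref{eq 1.3} gives $|a\gamma^{n}-(b\pm1)b^{l}|<2$, and after dividing by $(b\pm1)b^{l}$ one finds, outside a handful of small pairs $(b,l)$ that are treated directly, that the linear form
\[
\Lambda:=n\log\gamma-l\log b+\log\!\Bigl(\frac{a}{\,b\pm1\,}\Bigr)
\]
satisfies $|\Lambda|<4/\bigl((b-1)b^{l}\bigr)$. I would then apply Matveev's theorem to the three logarithms $\log\gamma$, $\log b$, $\log\!\bigl(a/(b\pm1)\bigr)$ over the cubic field $\mathbb{Q}(\gamma)$, with the admissible parameters $A_{1}=\log\gamma$ (since $3h(\gamma)=\log\gamma$), $A_{2}=3\log b$, $A_{3}=3h\!\bigl(a/(b\pm1)\bigr)\le 3\log(b+1)+\log 31\le\log(248b^{3})$, and $B=n$ (as $l<n$), obtaining a lower bound of the shape $\log|\Lambda|>-C(3,3)\,(1+\log n)\,A_{1}A_{2}A_{3}$ with $C(3,3)=1.4\cdot 30^{6}\cdot 3^{4.5}\cdot 9\,(1+\log 3)$. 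Comparing the two estimates for $|\Lambda|$, dropping non‑negative terms, and using $n\log\gamma\le l\log b+O(\log b)$ gives $n<3\,C(3,3)\,(1+\log n)\,\log b\,\log(248b^{3})+O(\log b)$. Absorbing the error term and then invoking the elementary lemma that $x<D(1+\log x)$ forces $x<2D\log D$ (for $D$ large) produces, after collecting constants, the bound asserted in the theorem. Finiteness for each fixed $b$, and hence the feasibility of a complete search when $2\le b\le 10$, is immediate once $n$ (and with it $l$) is bounded.

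Two points require care. First, the verification of Matveev's non‑vanishing hypothesis $\Lambda\ne 0$: if $a\gamma^{n}=(b\pm1)b^{l}\in\mathbb{Q}$, then applying the automorphism $\gamma\mapsto\delta$ would force $|\sigma(a)|\,|\delta|^{n}=(b\pm1)b^{l}\ge 2$, which is impossible because $|\sigma(a)|<1$ and $|\delta|<1$---so this step is quick, but it must not be overlooked. Second, one must carry all four sign patterns $(b+1)b^{l}\pm1$ and $(b-1)b^{l}\pm1$ through the argument simultaneously, and the inequalities above are arranged to be uniform in them. The bulk of the remaining work---and, for the explicit part of the theorem, the main computational task---is the resolution for $2\le b\le 10$: one specializes the bound on $n$ and runs a Baker--Davenport/LLL reduction on the inequality for $\Lambda$ to bring $n$ down to a short range, which is then searched.
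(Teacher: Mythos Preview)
Your proposal is correct and follows essentially the same route as the paper: rewrite \eqref{eq 1.3} via the Binet formula, bound the resulting linear form in $\log\gamma$, $\log b$, $\log\bigl(a/(b\pm1)\bigr)$ over the cubic field, apply Matveev with $B_1=\log\gamma$, $B_2=3\log b$, $B_3=\log(248b^3)$ and $D=n$, and finish with the $x/(\log x)<S\Rightarrow x<2S\log S$ lemma. The only cosmetic difference is that the paper divides by $C_\varphi\varphi^{n}$ (obtaining $|\Lambda|<3.4/\varphi^{n}$ directly in terms of $n$), whereas you divide by $(b\pm1)b^{l}$ and then convert $l\log b$ back to $n\log\gamma$; the paper's normalisation saves that extra step but the outcome is identical.
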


The proofs of our theorems rely primarily on linear forms in logarithms of algebraic numbers, as developed by Matveev \cite{Matveev}, together with the reduction algorithm of Dujella and Pethő \cite{Dujella}. We begin by presenting several preliminary results, which are discussed in detail in the following section.
 
\section{Preliminary results}
This section is devoted to gathering several definitions, notations, properties, and results that will be used in the rest of this study.
\subsection{Linear forms in logarithms}
Let $\gamma$ be an algebraic number of degree $d$ with a minimal primitive polynomial 
\[
f(Y):= b_0 Y^d+b_1 Y^{d-1}+ \cdots +b_d = b_0 \prod_{j=1}^{d}(Y- \gamma^{(j)}) \in \mathbb{Z}[Y],
\]
where the $b_j$'s are relatively prime integers, $b_0 >0$, and the $\gamma^{(j)}$'s are conjugates of $\gamma$. Then the \emph{logarithmic height} of $\gamma$ is given by
\[
	h(\gamma)=\frac{1}{d}\left(\log b_0+\sum_{j=1}^{d}\log\left(\max\{|\gamma^{(j)}|,1\}\right)\right).
\]
With the above notation, Matveev (see  \cite{Matveev} or  \cite[Theorem~9.4]{Bugeaud}) proved the following result.
	
\begin{theorem}\label{thm4}
Let $\eta_1, \ldots, \eta_s$ be positive real algebraic numbers in a real algebraic number field $\mathbb{L}$ of degree $d_{\mathbb{L}}$. Let $a_1, \ldots, a_s$ be non-zero  integers such that
\[
	\Lambda :=\eta_1^{a_1}\cdots\eta_s^{a_s}-1 \neq 0.
\]
Then
\[
	- \log  |\Lambda| \leq 1.4\cdot 30^{s+3}\cdot s^{4.5}\cdot d_{\mathbb{L}}^2(1+\log d_{\mathbb{L}})(1+\log D)\cdot B_1 \cdots B_s,
\]
where
\[
		D\geq \max\{|a_1|,\ldots,|a_s|\},
\]
and
\[
	B_j\geq \max\{d_{\mathbb{L}}h(\eta_j),|\log \eta_j|, 0.16\}, ~ \text{for all} ~ j=1,\ldots,s.
\]
\end{theorem}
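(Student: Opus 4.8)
The statement is Matveev's lower bound for a nonzero linear form in logarithms of algebraic numbers, a deep result in transcendence theory in the tradition of Baker's method; it is not derived from the elementary facts assembled above but rests on the full transcendence machinery, and I can only sketch the strategy by which such an explicit estimate is obtained. The first move is to linearize. Since each $\eta_j$ is a positive real, the principal logarithms $\log\eta_j$ are real, and on setting
\[
\Lambda_0 := a_1\log\eta_1+\cdots+a_s\log\eta_s
\]
one has $\Lambda=e^{\Lambda_0}-1$, so that $|\Lambda_0|$ and $|\Lambda|$ are comparable whenever $|\Lambda|$ is small, and $\Lambda\neq 0$ forces $\Lambda_0\neq 0$. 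It thus suffices to bound $|\Lambda_0|$ from below, which is a transcendence problem on the commutative group variety $\mathbb{G}_a\times\mathbb{G}_m^{\,s}$.

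The core construction is an \emph{interpolation determinant} in the spirit of Laurent, which is the device that yields fully explicit constants. One fixes integer parameters (a degree $L$ in the polynomial variable, a degree $T$ in the exponential variables, and a number $R$ of sample points) and forms the square matrix whose rows are indexed by the functions
\[
\varphi_{\ell,\,t_1,\ldots,t_s}(z)=z^{\ell}\,\eta_1^{\,t_1 z}\cdots\eta_s^{\,t_s z}
\]
and whose columns record their values at a block of integer sample points $z=r$. Writing $\Delta$ for the determinant of this matrix, the whole argument turns on estimating $\Delta$ in two opposite ways and comparing them.

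On the arithmetic side, each entry $\varphi(r)$ lies in a fixed extension of $\mathbb{L}$, so $\Delta$ is algebraic; applying the product formula over all archimedean and non-archimedean places (a Liouville-type estimate) bounds the denominator and the sizes of the conjugates of $\Delta$, whence, provided $\Delta\neq 0$, a lower bound $\log|\Delta|\ge -c_1$ with $c_1$ explicit in $L,T,R$, the heights $h(\eta_j)$ and $d_{\mathbb{L}}$. (If $\Delta=0$ one descends to a smaller system, so this case may be excluded.) On the analytic side one exploits the hypothesis that $\Lambda_0$ is extremely small: along the sampled direction the exponential $\eta_1^{a_1 z}\cdots\eta_s^{a_s z}=e^{z\Lambda_0}$ is nearly constant, so the rows of the matrix are nearly linearly dependent; an analytic estimate based on Schwarz's lemma and the maximum-modulus principle then forces $\log|\Delta|\le -c_2$, where $c_2$ grows as $|\Lambda_0|$ is assumed smaller.

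Choosing the free parameters $L,T,R$ optimally so that $c_2>c_1$ yields a contradiction unless $|\Lambda_0|$ exceeds the asserted bound, which after unravelling $\Lambda_0$ back to $\Lambda$ gives exactly the stated inequality for $-\log|\Lambda|$. The principal obstacle — and the entire content of the numerical constant $1.4\cdot 30^{s+3}\cdot s^{4.5}$ and of the dependence on $d_{\mathbb{L}}$, $\log D$ and the $B_j$ — is the explicit bookkeeping: every constant in the construction of the determinant, in the Liouville bound, and in the analytic estimates must be tracked, and the parameters optimized, with ruthless care. Matveev's decisive contribution is the essentially optimal dependence $C^{s}$ on the number $s$ of logarithms; this is secured by a refined algebraic argument (a Kummer-type descent exploiting the structure of the multiplicative group generated by the $\eta_j$) that sidesteps the costly multiplicity estimate on group varieties used in the earlier Baker--W\"ustholz approach. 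Reproducing that optimization, rather than any single conceptual step, is where the real difficulty lies.
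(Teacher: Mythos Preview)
The paper does not prove this statement at all: Theorem~\ref{thm4} is quoted as a known result of Matveev (with the reference to \cite{Matveev} and \cite[Theorem~9.4]{Bugeaud}) and is used as a black-box tool in the subsequent arguments. There is therefore no ``paper's own proof'' to compare against.

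Your sketch is a faithful high-level outline of the transcendence machinery behind Matveev's bound --- the reduction to a linear form in logarithms, the interpolation-determinant construction, the opposing arithmetic (product-formula/Liouville) and analytic (Schwarz-lemma) estimates, and the Kummer-type descent that secures the $C^s$ dependence. As a description of what goes into the result it is accurate, but it is a sketch, not a proof: none of the parameter choices, the non-vanishing of the determinant, or the explicit constant-tracking that produces $1.4\cdot 30^{s+3}\cdot s^{4.5}\cdot d_{\mathbb{L}}^2(1+\log d_{\mathbb{L}})$ is actually carried out. In the context of this paper the appropriate treatment is exactly what the authors do --- cite Matveev and move on --- so your write-up, while informative, goes beyond what is needed and still falls far short of an actual proof.
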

	
\subsection{The reduction method}
	Our next tool is a version of the reduction method of Baker and Davenport (see \cite{Baker}). Here, we use a slight variant of the version given by Dujella and Peth\"{o} (see \cite{Dujella}). For a real number $x$, we write $||x||$ for the distance from  $x$ to the nearest integer.
	
	\begin{lemma}\label{lem 2.2}
		Let $M$ be a positive integer, $p/q$ be a convergent of the continued fraction of the irrational $\tau$ such that $q > 6M$, and $A, B, \mu $ be some real numbers with $A>0$ and $B>1$. Furthermore, let
		\[
        \epsilon:=||\mu q|| - M \cdot ||\tau q||.
		\]
		If $\epsilon >0$, then there is no solution to the inequality 
		\[
        0< |u \tau - v + \mu| <AB^{-w} 
        \]
		in positive integers $u$, $v$ and $w$ with
		\[
		u \leq M \quad\text{and}\quad w \geq \frac{\log(Aq/\epsilon)}{\log B}.
		\]	
	\end{lemma}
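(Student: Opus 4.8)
The plan is to prove the lemma by contradiction, relying on a single nontrivial property of continued fractions together with elementary estimates on the distance-to-nearest-integer function $||\cdot||$. Suppose, contrary to the conclusion, that positive integers $u,v,w$ satisfy $0<|u\tau-v+\mu|<AB^{-w}$ together with $u\le M$ and $w\ge \log(Aq/\epsilon)/\log B$. The first step is to turn the lower bound on $w$ into a usable upper bound on $AB^{-w}$. Since $B>1$ and $A,q,\epsilon>0$, the inequality $w\ge \log(Aq/\epsilon)/\log B$ is equivalent to $B^{w}\ge Aq/\epsilon$, hence $AB^{-w}\le \epsilon/q$. Feeding this into the hypothesis gives the sharpened bound $0<|u\tau-v+\mu|<\epsilon/q$, so that $q\,|u\tau-v+\mu|<\epsilon$.

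The second step brings in the continued fraction. Because $p/q$ is a convergent of $\tau$ and $q>6M\ge 6$, the denominator $q$ is large enough that the approximation error $|q\tau-p|$ stays below $1/2$; consequently $p$ is the integer nearest to $q\tau$, and therefore $|q\tau-p|=||\tau q||$. This is the only place where the hypotheses on $p/q$ and the size condition $q>6M$ are used. Writing $q\tau=p+(q\tau-p)$ and noting that $qv-up$ is an integer, I would rearrange $q\mu$ as
\[
q\mu=q\,(u\tau-v+\mu)-u\,(q\tau-p)+(qv-up).
\]
Since the distance to the nearest integer is unchanged by adding the integer $qv-up$, this yields $||\mu q||=||\,q\,(u\tau-v+\mu)-u\,(q\tau-p)\,||$.

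For the final step I would estimate the right-hand side. Using $||x||\le|x|$, the triangle inequality, the bound $u\le M$, and the identity $|q\tau-p|=||\tau q||$,
\[
||\mu q||\le q\,|u\tau-v+\mu|+u\,|q\tau-p|<\epsilon+M\,||\tau q||.
\]
By the very definition $\epsilon=||\mu q||-M\,||\tau q||$, the right-hand side equals exactly $||\mu q||$, so we arrive at $||\mu q||<||\mu q||$, a contradiction. Hence the inequality admits no solution subject to the stated constraints.

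I expect the only genuinely delicate point to be the justification that $|q\tau-p|=||\tau q||$, that is, that the convergent numerator $p$ really is the closest integer to $q\tau$; this rests on the standard estimate $|q_k\tau-p_k|<1/q_{k+1}$ for convergents and is precisely what the hypothesis $q>6M$ safeguards. The remaining steps are routine: converting the bound on $w$, absorbing the integer $qv-up$ into $||\cdot||$, and applying subadditivity. Care is needed only to keep every inequality strict, so that the final $||\mu q||<||\mu q||$ is a genuine contradiction, and to use that $||\cdot||$ is subadditive and invariant under integer translation.
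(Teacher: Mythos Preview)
Your argument is correct and is essentially the standard proof of this reduction lemma. Note, however, that the paper does not prove Lemma~\ref{lem 2.2} at all: it is quoted as a tool from Dujella and Peth\H{o} \cite{Dujella} (a slight variant of the Baker--Davenport reduction) and used without proof, so there is no ``paper's own proof'' to compare against. Your write-up reproduces the classical contradiction argument behind that result; the only minor remark is that the condition $q>6M$ is stronger than what you actually use (any $q\ge 2$ already forces $|q\tau-p|<1/q<1/2$ for a convergent, hence $|q\tau-p|=\|q\tau\|$), the factor $6$ being a practical choice that helps make $\epsilon>0$ likely in applications rather than a logical necessity in the proof.
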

Note that Lemma \ref{lem 2.2} cannot be applied when $\mu = 0$ (since then $\epsilon < 0$) or when $\mu$
is a multiple of $\tau$. For this case, we use the following well-known technical result from Diophantine approximation, known as Legendre’s criterion.
\begin{lemma}\label{lem 2.3}
Let $\tau$ be a real number and $r, s$ integers such that
\[
\left|\tau - \frac{r}{s}\right| < \frac{1}{2 s^{2}}.
\]
Then $r/s = p_{k}/q_{k}$ is a convergent of the continued fraction expansion $[a_{0}, a_{1},\ldots]$ of $\tau$ (with
some $k = 0, 1,\ldots$). Further, let $M$ and $N$ be nonnegative integers such that $q_N > M$. Then putting $a(M) := \max \{a_i: i=0,1,2,\dots, N \}$, the inequality
		\[
		\Bigm| \tau - \frac{r}{s} \Bigm| > \frac{1}{(a(M)+2)s^2},
		\]
		holds for all pairs $(r, s)$ of positive integers with $0 < s < M$.
	\end{lemma}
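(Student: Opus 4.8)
The plan is to establish the two assertions of the lemma separately: first the classical Legendre identification of $r/s$ as a convergent of $\tau$, and then the explicit lower bound that is actually invoked in the reduction step.

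For the first assertion I would reduce to the case $\gcd(r,s)=1$ (a fraction is a convergent exactly when its reduced form is) and write $\tau - r/s = \delta/s^2$ with $|\delta| < 1/2$; if $\delta = 0$ then $\tau = r/s$ is rational and the claim is immediate, so assume $\delta \ne 0$. Expanding $r/s$ as a finite continued fraction $[a_0,a_1,\ldots,a_n]$ and using the freedom to adjust the parity of $n$ (via the identity $[a_0,\ldots,a_n] = [a_0,\ldots,a_n-1,1]$), I would arrange that $(-1)^n$ and $\delta$ share the same sign. Setting $p_n/q_n = r/s$ and defining $\theta$ by $\tau = (p_n\theta + p_{n-1})/(q_n\theta + q_{n-1})$, a direct computation using $p_n q_{n-1} - p_{n-1}q_n = (-1)^{n-1}$ and $s = q_n$ yields $\theta = -q_{n-1}/q_n + (-1)^n/\delta$. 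The sign choice forces $(-1)^n/\delta = 1/|\delta| > 2$, while $0 \le q_{n-1}/q_n \le 1$, whence $\theta > 2 - 1 = 1$. Thus $\tau = [a_0,\ldots,a_n,\theta]$ with $\theta > 1$, which exhibits $r/s = p_n/q_n$ as a convergent of $\tau$.

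For the second assertion I would argue by contradiction, supposing $|\tau - r/s| \le 1/((a(M)+2)s^2)$ for some reduced $r/s$ with $0 < s < M$. Since $q_N > M > s > 0$ forces $N \ge 1$, we have $a(M) \ge a_1 \ge 1$, so this bound is strictly less than $1/(2s^2)$ and the first part applies: $r/s = p_k/q_k$ with $s = q_k$. The strict monotonicity of the denominators together with $q_k = s < M < q_N$ gives $k \le N-1$, hence $k+1 \le N$ and therefore $a_{k+1} \le a(M)$. Now I invoke the exact approximation formula $|\tau - p_k/q_k| = 1/(q_k(\alpha_{k+1}q_k + q_{k-1}))$, where $\alpha_{k+1} = [a_{k+1};a_{k+2},\ldots]$ is the complete quotient and satisfies $\alpha_{k+1} < a_{k+1}+1$. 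Combining this with $q_{k-1} < q_k$ gives $\alpha_{k+1}q_k + q_{k-1} < (a_{k+1}+2)q_k \le (a(M)+2)q_k$, so that $|\tau - p_k/q_k| > 1/((a(M)+2)s^2)$, contradicting the assumption and establishing the claimed lower bound.

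The main obstacle lies in the bookkeeping of the first part: one must verify that $r/s$ genuinely admits two finite continued-fraction representations of opposite parity so that the sign of $(-1)^n$ can always be matched to that of $\delta$, and one must check the formula for $\theta$ at the boundary indices $n = 0, 1$ (where $q_{-1} = 0$ or $q_{n-1} = q_n$), confirming that $\theta > 1$ remains strict in every case. Once the convergent identification is secured, the second part is a routine application of the standard continued-fraction estimates, the only delicate point being the reduction of $r/s$ to lowest terms so that its denominator is exactly $q_k$.
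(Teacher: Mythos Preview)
The paper does not actually prove Lemma~\ref{lem 2.3}: it is introduced as a ``well-known technical result from Diophantine approximation, known as Legendre's criterion'' and stated without proof as an auxiliary tool for the reduction step. So there is no proof in the paper to compare your proposal against.

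That said, your argument is the standard one and is correct. The parity-adjustment trick together with the formula $\theta = -q_{n-1}/q_n + (-1)^n/\delta$ is the classical route to Legendre's theorem, and for the second part the exact approximation identity $|\tau - p_k/q_k| = 1/(q_k(\alpha_{k+1}q_k + q_{k-1}))$ combined with $\alpha_{k+1} < a_{k+1}+1$ and $q_{k-1}\le q_k$ gives exactly the bound $1/((a(M)+2)q_k^2)$. The one loose end you flag --- reducing $r/s$ to lowest terms so that the denominator equals $q_k$ --- is harmless: if $\gcd(r,s)=d>1$ and $s'=s/d$, then the coprime case yields $|\tau - r/s| = |\tau - p_k/q_k| > 1/((a(M)+2)(s')^2) \ge 1/((a(M)+2)s^2)$, so the general statement follows at once.
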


\subsection{Properties of Padovan and Perrin sequences}\label{subsec2.3}
	In this subsection, we begin by recalling some properties of
	these ternary recurrence sequences that will be used later.
	The characteristic equation of these sequences are $x^3-x-1=0$ which has one real root $\alpha$ and two complex conjugate roots $\beta$ and $\gamma = \overline{\beta}$, where
	\[
	\alpha=\dfrac{r_1+r_2}{6},\quad \beta=\dfrac{-r_1-r_2+i\sqrt{3}(r_1-r_2)}{12},
	\]
	with $r_{1}=\sqrt[3]{108 + 12\sqrt{69}}$ and $r_{2}=\sqrt[3]{108-12\sqrt{69}}.$  The Binet's formula of $\mathcal{P}_n$ is
	\begin{equation}\label{eq 2.4}
		\mathcal{P}_{n}=p \alpha^n + q \beta^n + r \gamma^n,\; \text{ for all } n\geq 0,
	\end{equation}
	and that for $E_{n}$ is
	\begin{equation}\label{eq 2.5}
		E_n=\alpha^n+\beta^n+\gamma^n,\; \text{ for all } n\geq 0,
	\end{equation}
	where
	\begin{align}\label{eq 2.6}
		p &= \dfrac{(1-\beta)(1-\gamma)}{(\alpha-\beta)(\alpha-\gamma)}= \dfrac{1+\alpha}{-\alpha^2+3\alpha+1},\vspace{1mm}\nonumber \\
		q &= \dfrac{(1-\alpha)(1-\gamma)}{(\beta-\alpha)(\beta-\gamma)}=\dfrac{1+\beta}{-\beta^2+3\beta+1},\vspace{1mm}\\
		r &= \dfrac{(1-\alpha)(1-\beta)}{(\gamma-\alpha)(\gamma -\beta)}=\dfrac{1+\gamma}{-\gamma^2+3\gamma+1}=\overline{q}.\nonumber
	\end{align}
	The minimal polynomial of $p$ over $\mathbb{Z}$ is  $23x^{3} - 23x^{2} + 6x - 1$ with all its zeros lie strictly inside the unit circle. Numerically, we have the following estimates:
	\begin{align}\label{eq 2.7}
		1.32<&\alpha<1.33,\nonumber \\
		0.86<|\beta|=&|\gamma| =\alpha^{-1/2} <0.87,\nonumber \\
		0.72<&p<0.73,\\
		0.24<|q|&=|r|<0.25.\nonumber
	\end{align}
	In particular, setting
	\begin{equation}\label{eq 2.8}
		\pi(n):= \mathcal{P}_{n} - p\alpha^n = q\beta^n + r\gamma^n, \quad \text{we have} \quad |\pi(n)|< \frac{1}{\alpha^{n/2}}
	\end{equation}
	and
	\begin{equation}\label{eq 2.9}
		\psi(n):= E_{n} - \alpha^n = \beta^n + \gamma^n, \quad \text{we have} \quad |\psi(n)|< \frac{2}{\alpha^{n/2}}
	\end{equation}
	holds for all $n \geq 1$. Further, using induction, one can prove that
	\begin{equation}\label{eq 2.10}
		\alpha^{n-3}\leq \mathcal{P}_n \leq \alpha^{n-1}, \quad \text{for all} \quad n \geq 1
	\end{equation}
	and 
	\begin{equation}\label{eq 2.11}
		\alpha^{n-2}\leq E_n \leq \alpha^{n+1},\quad \text{for all} \quad n\geq 2.
	\end{equation}
	
	\subsection{Properties of Narayana's cows sequence}
	The characteristic polynomial of Narayana's cows sequence is 
	$f(x) = x^{3} - x^{2} - 1.$ This polynomial is irreducible in $\mathbb{Q} [x]$ and has only real root $\varphi$ which has absolute value greater than $1$, while the other two conjugate complex roots are $\lambda$ and $\delta = \Bar{\lambda}$ with $|\lambda| = |\delta|< 1$. The Binet formula for $N_{n}$ is given by 
	\begin{equation}\label{eq 2.12}
		N_{n} = C_{\varphi} \varphi^{n} + C_{\lambda} \lambda^{n} + C_{\delta} \delta^{n},\; \text{ for all } n\geq 0,
	\end{equation}
	where
	\begin{align}\label{eq 2.13}
		C_{\varphi} = \frac{\varphi}{(\varphi-\lambda)(\varphi-\delta)}, \quad \quad
		C_{\lambda} = \frac{\lambda}{(\lambda-\varphi)(\lambda-\delta)}, \quad \quad
		C_{\delta} = \frac{\delta}{(\delta-\varphi)(\delta-\lambda)}.
	\end{align}
	The coefficient $C_{\varphi}$ has the minimal polynomial $31x^{3} - 3x - 1$  over $\mathbb{Z}$ and
	all the zeros of this polynomial lie strictly inside the unit circle. Numerically, we have the following:
		\begin{align}\label{eq 2.14}
		1.46 < & \varphi < 1.47,\nonumber \\
		0.82 <|\lambda| &= |\delta| < 0.83,\nonumber \\
		0.41 < & C_{\varphi} < 0.42,\\
		0.27 < |C_{\lambda}|& = |C_{\delta}| < 0.28.\nonumber
	\end{align}
	Considering the estimates for $\varphi, \lambda, \delta, C_{\varphi}, C_{\lambda}, C_{\delta}$ and putting
	\begin{equation}\label{eq 2.15}
		e(n) := N_{n} - C_{\varphi} \varphi^{n} = C_{\lambda} \lambda^{n} + C_{\delta} \delta^{n}, \quad \text{we have that} \quad |e(n)| < \frac{1}{\varphi^{n/2}},
	\end{equation}
	for all $n \geq 1$. Using induction, it can be seen that 
	\begin{equation}\label{eq 2.16}
		\varphi^{n-2} \leq N_{n} \leq \varphi^{n-1} \quad \text{hold for all} \quad n \geq 1.
	\end{equation}
\subsection{Other useful lemmas}
		We conclude this section by recalling two lemmas that we will need in this work.
	
	\begin{lemma}\label{lem 2.4} {\rm{(\cite{Bravo}, Lemma 8)}}
For any non-zero real number $x$, we have

(a) $0 < x < |e^{x} - 1|$.

(b) If $x < 0$ and $|e^{x} - 1| < 1/2$, then $|x| < 2 |e^{x} - 1|$.
	\end{lemma}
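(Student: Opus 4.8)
The plan is to treat the two parts separately, since part (a) implicitly concerns positive $x$ (the chain $0 < x < |e^x-1|$ forces $x > 0$) while part (b) concerns negative $x$; together these two cases exhaust every non-zero real number, which is the scope announced in the statement. Each part will reduce to the monotonicity of an explicit one-variable function.

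For part (a), I would invoke the elementary fact that the exponential strictly dominates its first-order Taylor polynomial on the positive axis. For $x > 0$ one may write $e^x - 1 = \int_0^x e^t\,dt$ and bound $e^t > 1$ on $(0,x)$ to obtain $e^x - 1 > \int_0^x 1\,dt = x > 0$; equivalently, the power series $e^x = 1 + x + x^2/2! + \cdots$ has strictly positive tail when $x > 0$, so $e^x > 1 + x$. Since $e^x > 1$ forces $|e^x - 1| = e^x - 1$, this chain delivers $0 < x < |e^x - 1|$ exactly as claimed.

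For part (b), I would substitute $t := e^x$. Because $x < 0$ we have $t \in (0,1)$ and $|e^x - 1| = 1 - t$, so the hypothesis $|e^x-1| < 1/2$ restricts $t$ to the interval $(1/2, 1)$. Under this substitution the target inequality $|x| < 2|e^x - 1|$ reads $-\log t < 2(1-t)$, that is, $\log t - 2(t-1) > 0$. I would then study the auxiliary function $g(t) := \log t - 2(t-1)$ on $(1/2, 1]$: it satisfies $g(1) = 0$ and $g'(t) = 1/t - 2 < 0$ throughout $(1/2, 1)$, so $g$ is strictly decreasing there. Hence $g(t) > g(1) = 0$ for every $t \in (1/2, 1)$, which is precisely the inequality we want.

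There is no genuine obstacle in this argument: both parts collapse to the sign of an explicit elementary function and its derivative, and the only care required is in tracking signs when the absolute value is removed in the $x < 0$ case, so that the threshold $1/2$ and the factor $2$ enter with the correct roles. Since the statement is quoted verbatim from Bravo \cite{Bravo}, one could alternatively just cite that reference; the short self-contained calculus argument sketched above is the route I would take if a proof were required in place of a citation.
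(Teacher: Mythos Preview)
Your argument is correct in both parts: for (a) the integral (or power-series) comparison gives $e^{x}-1>x>0$ when $x>0$, and for (b) your substitution $t=e^{x}\in(1/2,1)$ together with the monotonicity of $g(t)=\log t-2(t-1)$ on $(1/2,1]$ yields the desired inequality. The paper itself does not prove this lemma at all; it simply quotes it with a reference to \cite{Bravo}, so there is no in-paper argument to compare against, and your short calculus proof is a perfectly acceptable stand-in for that citation.
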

	
	\begin{lemma}\label{lem 2.5} {\rm{(\cite{Sanchez}, Lemma 7)}}
		If $m \geq 1$, $S \geq (4m^{2})^{m}$ and $\frac{x}{(\log x)^{m}} < S$, then $x < 2^{m} S (\log S)^{m}$.
	\end{lemma}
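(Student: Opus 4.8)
The plan is to handle all four sign choices $(s_1,s_2)\in\{+1,-1\}^2$ in \eqref{eq 1.3} at once by writing the right-hand side as $(b+s_1)b^{l}+s_2$; the estimates below only use $b+s_1\ge b-1\ge 1$ and $|s_2|=1$, so nothing depends on the particular signs. First I would record a size relation. Combining \eqref{eq 2.16} with $N_n=(b+s_1)b^{l}+s_2\ge (b-1)b^{l}\ge b^{l}\ge 2^{l}$ gives $2^{l}<\varphi^{n-1}$, hence $l<n$; in particular the largest exponent entering the linear form below is $n$. Substituting the Binet formula \eqref{eq 2.12} as $C_{\varphi}\varphi^{n}=N_n-e(n)=(b+s_1)b^{l}+s_2-e(n)$ and dividing by $(b+s_1)b^{l}$ produces
\[
\Lambda:=\frac{C_{\varphi}}{\,b+s_1\,}\,\varphi^{n}\,b^{-l}-1=\frac{s_2-e(n)}{(b+s_1)b^{l}}.
\]
This $\Lambda$ is nonzero, since $\Lambda=0$ would force $s_2=e(n)$, impossible because $|e(n)|<\varphi^{-n/2}<1=|s_2|$ by \eqref{eq 2.15}. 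Using $(b+s_1)b^{l}=N_n-s_2\ge C_{\varphi}\varphi^{n}-2\ge\tfrac12C_{\varphi}\varphi^{n}$ for $n$ large together with $|s_2-e(n)|<2$, I get $|\Lambda|<4/(C_{\varphi}\varphi^{n})$, and hence the lower bound $-\log|\Lambda|>n\log\varphi-c_0$ for an explicit small $c_0$ coming from $4/C_{\varphi}$ via \eqref{eq 2.14}.

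The heart of the argument is a matching lower bound for $|\Lambda|$ from Matveev's Theorem \ref{thm4}, applied in the real cubic field $\mathbb{L}=\mathbb{Q}(\varphi)$ (so $d_{\mathbb{L}}=3$) with $s=3$ and
\[
\eta_1=\frac{C_{\varphi}}{\,b+s_1\,},\qquad \eta_2=\varphi,\qquad \eta_3=b,\qquad (a_1,a_2,a_3)=(1,n,-l).
\]
The decisive point is the grouping of $C_{\varphi}/(b+s_1)$ into the single number $\eta_1$, which keeps $s=3$ and routes all base-dependent data through one height. Since $C_{\varphi}$ has minimal polynomial $31x^{3}-3x-1$ with all conjugates inside the unit circle, $h(C_{\varphi})=\tfrac13\log 31$, and since $b+s_1$ is a positive rational integer, subadditivity of the height gives $h(\eta_1)\le\tfrac13\log 31+\log(b+1)$. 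Therefore
\[
d_{\mathbb{L}}\,h(\eta_1)\le\log 31+3\log(b+1)\le\log 31+\log(8b^{3})=\log(248b^{3}),
\]
using $b+1\le 2b$; this is exactly where $248b^{3}=8\cdot 31\cdot b^{3}$ appears. One checks $\log(248b^{3})$ also dominates $|\log\eta_1|$ and $0.16$ uniformly in $b\ge2$, so $B_1=\log(248b^{3})$; the remaining data are $B_2=\log\varphi$ (from $h(\varphi)=\tfrac13\log\varphi$) and $B_3=3\log b$ (from $h(b)=\log b$). With $D=n$, Theorem \ref{thm4} yields
\[
-\log|\Lambda|<1.4\cdot 30^{6}\cdot 3^{4.5}\cdot 3^{2}(1+\log 3)(1+\log n)\,\log(248b^{3})\,(\log\varphi)\,(3\log b).
\]

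Comparing this with $-\log|\Lambda|>n\log\varphi-c_0$, the factor $\log\varphi$ cancels and, after dividing through, I am left with an inequality of the shape
\[
n<c_1+c_2\,(\log b)\,\log(248b^{3})\,(1+\log n),
\]
where $c_2$ is the explicit absolute constant obtained as the product of $1.4\cdot 30^{6}\cdot 3^{4.5}\cdot 9\cdot(1+\log 3)$, the factor $3$ from $B_3$, and $1/\log\varphi$. Rewriting this in the form $n/\log n<S$ with
\[
S:=9.75\times 10^{12}\,\log b\,\log(248b^{3}),
\]
which absorbs $c_1$ and the gap between $1+\log n$ and $\log n$ in the relevant range $\log n\gg 1$, I would then apply Lemma \ref{lem 2.5} with $m=1$. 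Since $S\ge 4=(4\cdot 1^{2})^{1}$, the lemma gives $n<2S\log S$. Finally $\log S=\log(9.75\times 10^{12})+\log\log b+\log\log(248b^{3})=29.91+\log\log b+\log\log(248b^{3})$ and $2S=1.95\times 10^{13}\log b\log(248b^{3})$, which is precisely the asserted bound; together with $l<n$ this shows the equation has only finitely many solutions for each $b$.

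The main obstacle is not conceptual but bookkeeping: the constants must be tracked so that they collapse exactly to $1.95\times 10^{13}$ and $29.91$, and the grouping $\eta_1=C_{\varphi}/(b+s_1)$ must be chosen deliberately to hold $s=3$ and to make $\log(248b^{3})$ emerge as a single height. The genuinely delicate steps are verifying that $\log(248b^{3})$ dominates $|\log\eta_1|$ and $0.16$ uniformly in $b\ge2$ and in all four sign cases, and checking the large-$n$ thresholds behind the elementary estimates (such as $(b+s_1)b^{l}\ge\tfrac12C_{\varphi}\varphi^{n}$); the finitely many small $n$ excluded by these thresholds do not affect the stated upper bound.
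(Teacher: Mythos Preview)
Your proposal does not address the stated lemma at all. The statement under consideration is Lemma~\ref{lem 2.5}: the elementary analytic inequality asserting that if $m\ge 1$, $S\ge(4m^{2})^{m}$, and $x/(\log x)^{m}<S$, then $x<2^{m}S(\log S)^{m}$. What you have written is instead a complete proof sketch of Theorem~\ref{thm3} (the bound for Narayana's cows numbers), in which Lemma~\ref{lem 2.5} is merely \emph{invoked} as a black box in the final step. Nowhere do you attempt to establish the lemma itself.

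A correct proof of Lemma~\ref{lem 2.5} is a short real-analysis argument having nothing to do with Binet formulas, Matveev's theorem, or the arithmetic of $\mathbb{Q}(\varphi)$: one shows that if $x\ge 2^{m}S(\log S)^{m}$ then $\log x\le 2\log S$ (using $S\ge(4m^{2})^{m}$ to control the extra $m\log\log S$ term), and hence $x/(\log x)^{m}\ge 2^{m}S(\log S)^{m}/(2\log S)^{m}=S$, a contradiction. The paper does not supply this argument either, since it simply cites the lemma from \cite{Sanchez}; but that is the content you were asked to prove. As it stands, your write-up is a reasonable outline of the paper's proof of Theorem~\ref{thm3}, but it is not a proof of the assigned statement.
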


\section{Thabit and Williams numbers base $b$ which are Padovan numbers}
In this section, we will prove Theorem \ref{thm1} and then give an illustration of this result for $2 \leq b \leq 10$.
\subsection{Proof of Theorem \ref{thm1}}
We start by assuming $n > 300$. Combining estimates \eqref{eq 2.10} together with the equation \eqref{eq 1.1} yields
\[
(b-1)b^l-1\leq (b\pm 1)b^l\pm 1 = P_n \leq \alpha^{n-1},
\]
which implies that
\begin{equation}\label{eq 3.12}
b^l < (b -1)b^l \leq \alpha^{n-1} + 1 < \alpha^{n+1}.
\end{equation}
Taking logarithms on both sides of inequalities \eqref{eq 3.12} ,
we obtain
\begin{equation}\label{eq 3.13}
l< n \quad \text{holds for all} \quad n > 300 \quad \text{and} \quad  b \geq 2.
\end{equation}
Using Binet's formula \eqref{eq 2.8}, we write the equation \eqref{eq 1.1} as
\[
(b\pm 1)b^l\pm 1=p \alpha^n + \pi(n)
\]
which further implies
\[
(b\pm 1)b^l - p \alpha^n = \pi(n) \pm 1. 
\]
Taking absolute values on both sides, we obtain
\[
\left|(b\pm 1)b^l - p\alpha^n \right|< \left| \pi(n) \right| + 1.
\]
Dividing both sides of the above inequality by $p\alpha^n$, we get
\begin{equation}\label{eq 3.14}
|\Lambda_{1}|<\frac{2.77}{\alpha^n},
\end{equation}
where
\begin{equation}\label{eq 3.15}
\Lambda_{1}=(b\pm 1)b^l p^{-1 }\alpha^{-n}-1.
\end{equation}
To apply Theorem \ref{thm4}, we need to show that $\Lambda_{1} \neq 0$. Indeed, $\Lambda_{1}= 0$ implies
\[
(b\pm 1)b^l=p\alpha^n.
\]
If we conjugate the above relation with an automorphism $\sigma$ of the Galois extension of $\mathbb{Q}(\alpha )$ over $\mathbb{Q}$ given by $\sigma(\alpha)= \beta$ and then taking absolute values, we obtain
	\[
	(b\pm 1)b^l = |q| |\beta|^{n}
	\]
which leads us to a contradiction as left-hand side exceeds $1$ for all $b \geq 2$, while its right-hand side is less than than 1. Hence, $\Lambda_{1} \neq 0$. Thus, we apply Theorem \ref{thm4} to $\Lambda_{1}$ given by \eqref{eq 3.15} with the parameters:
\[
\eta_{1} := (b\pm 1)p^{-1 }, \quad \eta_{2} :=b  , \quad \eta_{3} := \alpha,
\]
and
\[ b_{1}:= 1, \quad b_{2}:= l, \quad b_{3}:= -n.
\]
Note that the three numbers $\eta_{1}, \eta_{2}, \eta_{3}$ are positive real numbers and belong to the field  $\mathbb{L} := \mathbb{Q}(\alpha)$, so we can take $d_{\mathbb{L}} = [\mathbb{L}:\mathbb{Q}] = 3$. Since $h(\eta_{2}) = \log b $ and $h(\eta_{3}) =\frac{\log \alpha}{3}$, it follows that  
\[
\max\{3h(\eta_{2}),|\log \eta_{2}|,0.16\} = 3\log b := B_{2}
\]
and 
\[
\max\{3h(\eta_{3}),|\log \eta_{3}|,0.16\} = \log \alpha := B_{3}.
\]
On the other hand, by using the  properties of logarithmic height, it follows that 
\[
 h(\eta_{1}) \leq  h\left((b\pm 1)p^{-1 }\right) < h\left(b\pm 1\right) + h(p) < \log b + \log 2+ \frac{\log 23}{3}.
\]
Thus, we obtain
\[
\max\{3h(\eta_{1}),|\log \eta_{1}|,0.16\} = 3\log b+ 3 \log 2 + \log 23 := B_{1}.
\]
We can take $D:= \max \{|1|, |l|, |-n| \} = n$ by \eqref{eq 3.13}. Applying Matveev's theorem we get
\begin{equation}\label{eq 3.16}
- \log |\Lambda_{1}| < 1.4 \times 30^{6} \times 3^{4.5} \times (3)^{2} (1+ \log 3) (1 +\log n) (3\log b+ 3 \log 2  + \log 23 )  (3\log b) (\log \alpha).    
\end{equation}
From the comparison of lower bound \eqref{eq 3.16} and upper bound \eqref{eq 3.14} of $|\Lambda_{1}|$ gives us
\begin{equation}\label{eq 3.17}
n\log \alpha- \log 2.77< 2.74\times 10^{12} \log n \log b \log(184b^3)
\end{equation}
where we have used the fact $1+ \log n < 1.2 \log n$ for all  $n > 300$. The last inequality  \eqref{eq 3.17}, becomes
\[
\frac{n}{\log n} < 9.78 \times 10^{12} \log b \log (184 b^{3}).
\]
Now, we apply Lemma \ref{lem 2.5} with $S := 9.78 \times 10^{12}\log b \log (184b^3$), $x = n$ and $m = 1$. So,
we have 
\[
n < 1.96\times 10^{13}\log b \log (184 b^3) (29.91+\log(\log b)+\log (\log (184b^3))).
		\] 
This completes the proof of Theorem \ref{thm1}.
\subsection{Application for $2 \leq b \leq 10$}
In this subsection, we will determine all solutions to the Diophantine equation \eqref{eq 1.1} for $l \geq 1$ and
$2 \leq b \leq 10$.  So in this range we have the following result.
%We give here all the solutions of the Diophantine equation \eqref{eq 1.1} in range $2 \leq b \leq 10$. So in thisrange we have the following result.
\begin{theorem}\label{thm5} 
Let $b$ be a positive integer such that $2 \leq b \leq 10$. 
\begin{enumerate}
    \item  Then, the solutions $(n, b, l)$ to the Diophantine equation $\mathcal{P}_{n} = (b+1)b^l-1$
is $\{(7,2,1)\}$. More precisely, the Thabit numbers base $b$ of first kind which are Padovan numbers is $5$.

    \item  Then, the solutions $(n, b, l)$ to the Diophantine equation $\mathcal{P}_{n} = (b+1)b^l+1$
are in
\[
\left\{(8, 2, 1), (12, 4, 1), (14, 3, 2), (15, 2, 4), (19, 5, 2)\right\}.
\] 
More precisely, the Thabit numbers base $b$ of second kind which are Padovan numbers are $7$, $21$, $37$, $49$ and $151$.

\item Then, the solutions $(n, b, l)$ to the Diophantine equation $\mathcal{P}_{n} = (b-1)b^l-1$ are in
\[
\left\{ (0, 2, 1), (1, 2, 1), (2, 2, 1), (5, 2, 2), (7, 3, 1), (8, 2, 3)\right\}.
\]
More precisely, the Williams numbers base $b$ of first kind which are Padovan numbers are $1$, $3$, $5$, $7$.

\item  Then, the solutions $(n, b, l)$ to the Diophantine equation $\mathcal{P}_{n} = (b-1)b^l+1$ are in
\[
\left\{(5, 2, 1), (7, 2, 2), (8, 3, 1), (9, 2, 3), (12, 5, 1), (15, 4, 2), (16, 2, 6), (26, 6, 3)\right\}.
\]
More precisely, the Williams numbers base $b$ of second kind which are Padovan numbers are $3$, $5$, $7$, $9$, $21$, $49$, $65$, and $1081$.
\end{enumerate}    
\end{theorem}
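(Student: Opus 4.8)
\subsection*{Proof proposal for Theorem~\ref{thm5}}

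The plan is to combine the explicit upper bound furnished by Theorem~\ref{thm1} with the Baker--Davenport type reduction of Lemma~\ref{lem 2.2}, carried out separately for each base $b\in\{2,\dots,10\}$ and for each of the four sign choices in \eqref{eq 1.1}. So fix such a base $b$ and a sign pattern. If $n\le 300$ we will simply check the equation directly, so assume $n>300$; then Theorem~\ref{thm1}, together with $b\le10$, gives an explicit bound $n<N_0$ with $N_0<2\times10^{16}$, and by \eqref{eq 3.13} we also have $1\le l<n<N_0$. The aim of the reduction is to push $n$ below $300$, which contradicts $n>300$ and thereby leaves only the finite range $n\le 300$ to inspect.

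Starting from \eqref{eq 3.14}--\eqref{eq 3.15}, put
\[
\Gamma_1:=l\log b-n\log\alpha+\log\!\big((b\pm1)p^{-1}\big),
\]
so that $\Lambda_1=e^{\Gamma_1}-1$. For $n$ large enough that $2.77/\alpha^{n}<1/2$, Lemma~\ref{lem 2.4} yields $|\Gamma_1|<5.54\,\alpha^{-n}$, and dividing by $\log\alpha$ gives
\[
0<\big|\,l\,\tau-n+\mu\,\big|<A\,\alpha^{-n},\qquad
\tau:=\frac{\log b}{\log\alpha},\qquad \mu:=\frac{\log\!\big((b\pm1)p^{-1}\big)}{\log\alpha},
\]
with $A:=5.54/\log\alpha<20$. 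Here $\tau$ is irrational; $\mu>0$ because $0<p<1\le b-1$, so $(b\pm1)p^{-1}>1$; and $\mu$ is not a rational multiple of $\tau$, since $\big((b\pm1)p^{-1}\big)^{s}=b^{r}$ would force $p^{s}\in\mathbb{Q}$, hence $|\sigma(p)|=|p|$ for every conjugate $\sigma(p)$ of $p$, contradicting that the conjugates $q,r$ of $p$ satisfy $|q|=|r|<0.25<0.72<p$. The left-hand side is positive because $\Gamma_1=0$ would give $\Lambda_1=0$, which was excluded in the proof of Theorem~\ref{thm1}.

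Now I would apply Lemma~\ref{lem 2.2} with $u=l$, $v=n$, $w=n$, $B=\alpha$, $M:=N_0$, and a convergent $p_k/q_k$ of the continued fraction expansion of $\tau$ with $q_k>6M$ (a convergent of index around $35$ suffices). Computing $\epsilon:=\|\mu q_k\|-M\|\tau q_k\|$ numerically, one expects $\epsilon>0$ in each of the $9\times4$ instances, and then Lemma~\ref{lem 2.2} gives $n<\log(Aq_k/\epsilon)/\log\alpha$, a bound of size a few hundred; should a particular convergent happen to yield $\epsilon\le0$, one passes to a later convergent, and in the degenerate situation anticipated after Lemma~\ref{lem 2.2} — which does not arise here because $\mu$ is not a multiple of $\tau$ — one would instead invoke Legendre's criterion, Lemma~\ref{lem 2.3}. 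If the first pass does not already place $n$ below $300$, a second application of Lemma~\ref{lem 2.2} with the new value of $M$ does. Hence there is no solution with $n>300$, and it remains only to test, for each $n$ with $0\le n\le 300$, each base $2\le b\le 10$, and each sign $\varepsilon\in\{+1,-1\}$, whether $(\mathcal{P}_n-\varepsilon)/(b\pm1)$ is a positive integer power of $b$. This finite computation returns precisely the triples listed in items~(1)--(4), and converting each solution via \eqref{eq 1.1} yields the stated Thabit and Williams values.

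The work here is essentially computational rather than conceptual: one must run the logarithmic-form reduction in all thirty-six cases, each with its own continued-fraction expansion of $\log b/\log\alpha$ and its own value of $\epsilon$, and verify case by case that the chosen convergent indeed gives $\epsilon>0$ so that Lemma~\ref{lem 2.2} applies (and, where a second pass is needed, that the reduced bound really does drop below $300$). Once the bound on $n$ has been lowered to $300$, the concluding enumeration of Padovan numbers is routine.
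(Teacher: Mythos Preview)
Your proposal is correct and follows essentially the same route as the paper: derive the inequality $|l\tau-n+\mu|<A\alpha^{-n}$ from \eqref{eq 3.14}--\eqref{eq 3.15} via Lemma~\ref{lem 2.4}, apply Lemma~\ref{lem 2.2} with $M$ the bound from Theorem~\ref{thm1} (the paper uses $M=1.82\times10^{16}$ and the convergent $q_{44}$, obtaining $n<212$ in a single pass), and then enumerate $0\le n\le 300$. Two minor remarks: the outer sign $\pm1$ in \eqref{eq 1.1} does not enter $\mu$, so there are only $9\times 2=18$ distinct $(\tau,\mu)$ pairs rather than thirty-six; and your verification that $\mu$ is not a rational multiple of $\tau$ is a nice addition that the paper omits, simply reporting that $\epsilon(b)>0$ was found computationally in every case.
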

\begin{proof}
Since $2 \leq b \leq 10$, from Theorem \ref{thm1} we obtain $n < 1.82 \times 10^{16}$. To reduce these bounds we need to apply Lemma \ref{lem 2.2}. Let us define
	\begin{equation}\label{eq 3.19}
		\Gamma_{1} := l\log b-n\log\alpha+\log (b\pm 1) p^{-1}.  
	\end{equation}
	Then $e^{\Gamma_{1}} - 1 := \Lambda_{1}$, where $\Lambda_{1}$ is defined by \eqref{eq 3.15}. Therefore, \eqref{eq 3.14} implies that
	\[
    |e^{\Gamma_{1}} -1 | < \frac{2.77}{\alpha^n}.
    \]
	Note that $\Gamma_{1} \neq 0$. Thus, we distinguish the following cases. If $\Gamma_{1} > 0$, then we can apply Lemma \ref{lem 2.4} (a) to obtain
	\[
	0 < \Gamma_{1} < e^{\Gamma_{1}} - 1 <  \frac{2.77}{\alpha^n}.
	\]
	If $\Gamma_{1} < 0$, then from \eqref{eq 3.14} we have that $|e^{\Gamma_{1}} - 1|< 1/2$ and therefore $e^{|\Gamma_{1}|} < 2$. Thus, by Lemma \ref{lem 2.4} (b), we have that 
	\[
	0 < |\Gamma_{1}| \leq e^{|\Gamma_{1}|} - 1 = e^{|\Gamma_{1}|}|e^{\Gamma_{1}} - 1 | <\frac{5.54}{\alpha^n}.
	\]
	So in both cases, we have 
	\begin{equation}\label{eq 3.20}
		0 < |\Gamma_{1}| <  5.54\cdot \alpha^{-n}. 
	\end{equation}
	Inserting \eqref{eq 3.19} in \eqref{eq 3.20} and dividing both sides by $\log \alpha$, it results that
	\begin{equation}\label{eq 3.21}
		0<\left| l \left(\frac{\log b}{\log \alpha} \right)-n+ \frac{\log (b\pm 1)p^{-1}}{\log \alpha}\right|<{19.7} \cdot {\alpha^{-n}}.  	
	\end{equation}
    In order to apply Lemma \ref{lem 2.2}, we set
	\[ \tau := \frac{\log b}{\log \alpha},  \quad \mu := \frac{\log (b\pm 1)p^{-1}}{\log \alpha}, \quad A:= 19.7 \quad \text{and} \quad B:= \alpha. 
	\]
Note that  $\tau =  \frac{\log b}{\log \alpha} \notin \mathbb{Q}$ because if $\frac{\log b}{\log \alpha} = \frac{s}{t}$ for some coprime positive integers $s$ and $t$, we would have $b^{t} = \alpha^{s}  \in  \mathbb{Z}$. Using the automorphism $\sigma \in Gal(\mathbb{L} / \mathbb{Q})$, the Galois group of the extension $\mathbb{L} / \mathbb{Q}$, such that $\sigma(\alpha)=\beta$. Applying this to the above relation and taking absolute values we get $1 < b^{t} = |\beta|^{s} < 1$,  which is a contradiction. Now, we apply Lemma \ref{lem 2.2} with $M := 1.82 \times 10^{16}$ which is the upper bound of $n$. For every $b$ such that $2 \leq b \leq 10$, we found that $q_{44}$ the denominator of the $44$-th convergent of $\tau$ exceeds $6M$ and $\epsilon(b) :=||\mu q_{44}|| - M ||\tau q_{44}|| > 0$. Then, with the help of \textit{Mathematica}, we can say that if the inequality \eqref{eq 3.21} has a solution, then
\[
	n < \frac{\log ( A q_{44}/\epsilon)}{\log B} < 212,
	\]
 contradicting the fact that $n > 300$. Now, we search for
the solutions to the Diophantine equation \eqref{eq 1.1} with 
\[
1 \leq l < n, \quad 0 \leq n \leq 300 \quad \text{and} \quad 2 \leq b \leq 10. 
\]
Using \textit{Mathematica}, we checked that all the solutions of the Diophantine equation \eqref{eq 1.1} are those listed in the statement of Theorem \ref{thm5}. This completes the proof.
\end{proof}

\section{Thabit and Williams numbers base $b$ which are Perrin numbers}
In this section, we will prove Theorem \ref{thm2} and then give an illustration of this result for $2 \leq b \leq 10$.
\subsection{Proof of Theorem \ref{thm2}}
We begin by considering $n > 350$. Using the inequality \eqref{eq 2.11} and equation \eqref{eq 1.2}, we obtain
\[
(b-1)b^l-1\leq (b\pm 1)b^l\pm 1 = E_n < \alpha^{n+1}
\]
leads to
\begin{equation}\label{eq 4.22}
b^l<(b -1)b^l\leq \alpha^{n+1}+1< \alpha^{n+2}.
\end{equation}
Taking logarithms on both sides of inequalities \eqref{eq 4.22} , we obtain
\begin{equation}\label{eq 4.23}
l< n \quad \text{for all} \quad n > 350,\quad  b\geq 2.
\end{equation}
By employing Binet's formula \eqref{eq 2.9}, equation \eqref{eq 1.2} can be expressed as
\[
(b\pm 1)b^l\pm 1= \alpha^n + \psi(n)
\]
which we rewrite as
\[
(b\pm 1)b^l -  \alpha^n = \psi(n) \pm 1. 
\]
Taking absolute values on both sides, we obtain
\[
\left|\alpha^n-(b\pm 1)b^l \right|< \left| \psi(n) \right| + 1.
\]
Dividing both sides of the above inequality by $\alpha^n$, we get
\[
\left|(b\pm 1)b^l \alpha^{-n}-1\right| < \frac{3}{\alpha^n}.
\]
We deduce that
\begin{equation}\label{eq 4.25}
|\Lambda_{2}| < \frac{3}{\alpha^n}, \quad \text{where} \quad \Lambda_{2}=(b\pm 1)b^l \alpha^{-n}-1.
\end{equation}
We can prove that $\Lambda_{2} \neq 0$ by a similar method used to show that $\Lambda_{1} \neq 0$. Now, let us apply Theorem \ref{thm4} to $\Lambda_{2}$ given by \eqref{eq 4.25} with the following parameters
		\[
		s:=3, \quad \quad (\eta_{1}, a_{1}) := \left(b\pm 1, 1 \right), \quad \quad (\eta_{2}, a_{2}) := \left( b, l \right),~ \quad {\rm and} ~  \quad (\eta_{3}, a_{3}) := \left(\alpha, -n \right).
		\]
The number field containing $\eta_{1}, \eta_{2}, \eta_{3}$ is  $\mathbb{L} := \mathbb{Q}(\alpha)$, which has degree $d_{\mathbb{L}} = [\mathbb{L}:\mathbb{Q}]= 3$. As calculated before, we take $B_{2} = 3 \log b $ and $B_{3} =\log \alpha $. On the other hand, by using the  properties of logarithmic height, it follows that 
\[
 h(\eta_{1}) \leq  h\left(b\pm 1 \right) < \log b+ \log 2 .
\]
Thus, we obtain
\[
\max\{3 h(\eta_{1}),|\log \eta_{1}|,0.16\} = 3\log b+ 3\log 2 := B_{1}.
\]
In addition by \eqref{eq 4.23}, we can take $D:= \max \{|1|, |l|, |-n| \} = n$. Therefore, according to Theorem \ref{thm4}, we have
\begin{equation}\label{eq 4.26}
- \log |\Lambda_{2}| < 1.4 \times 30^{6} \times 3^{4.5} \times 3^{2} (1+ \log 3) (1 +\log n) (3\log b+ 3\log 2 )  (3\log b) (\log \alpha).    
\end{equation}
 Comparing this lower bound \eqref{eq 4.26} with
the upper bound of $|\Lambda_{2}|$ provided by \eqref{eq 4.25}, we arrive at
\begin{equation}\label{eq 4.27}
n\log \alpha- \log 3 < 2.74\times 10^{12} \log n \log b \log(8b^3)
\end{equation}
where we used the fact that inequality $1 + \log n < 1.2 \log n$ holds for $n > 350$. Consequently, using \eqref{eq 4.27} yields
\[
 \frac{n}{\log n} < 9.78\times 10^{12} \log b \log (8b^{3}).
\]
Applying the inequality from Lemma \ref{lem 2.5} with $S := 9.78 \times 10^{12} \log b \log (8b^{3})$, $x = n$ and $m = 1$, we obtain
\[
n < 1.96 \times 10^{13}\log b (\log (8b^3))(29.91+\log(\log b)+\log (\log (8b^3))).
		\] 
This establishes and finishes the proof of Theorem \ref{thm2}.
\subsection{Application for $2 \leq b \leq 10$}
In this part, we will search for all solutions to the Diophantine equation \eqref{eq 1.2} for $l \geq 1$ and
$2 \leq b \leq 10$.  Thus, we get the following result.
\begin{theorem}\label{thm6}
Let $b$ be a positive integer such that $2 \leq b \leq 10$. 
\begin{enumerate}
    \item Then, the solutions $(n, b, l)$ to the Diophantine equation $E_{n} = (b+1)b^l-1$ are in
\[
\left\{(5,2,1), (6,2,1), (12,5,1)\right\}. 
\]
More precisely, the Thabit numbers base $b$ of first kind which are Perrin numbers are $5$ and $29$.
\item Then, the solutions $(n, b, l)$ to the Diophantine equation $E_{n} = (b+1)b^l+1$ is $\left\{ (7,2,1) \right\}$. 
More precisely, the Thabit numbers base $b$ of second kind which are Perrin is $7$.
\item Then, the solutions $(n, b, l)$ to the Diophantine equation $E_{n} = (b-1)b^l-1$ are in
\[
\left\{(0,2,2), (3,2,2), (5,3,1), (6,3,1), (7,2,3), (10,3,2), (12,6,1)\right\}.
\] 
More precisely, the Williams numbers base $b$ of first kind which are Perrin numbers are  $3$, $5$, $7$, $17$ and $29$.

\item Then, the solutions $(n, b, l)$ to the Diophantine equation $E_{n} = (b-1)b^l+1$ are in
\[
\left\{(0,2,1), (3,2,1), (5,2,2), (6,2,2), (7,3,1), (10,2,4)\right\}.
\]
More precisely, the Williams numbers base $b$ of second kind which are Perrin numbers are $3$, $5$, $7$ and $17$.
\end{enumerate}  
\end{theorem}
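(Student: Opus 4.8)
The plan is to run exactly the same machine as in the proof of Theorem \ref{thm5}: feed the explicit upper bound from Theorem \ref{thm2} into the Baker--Davenport reduction (Lemma \ref{lem 2.2}), push $n$ down to a small constant, and finish with a finite computer search. First I would specialize Theorem \ref{thm2} to the range $2 \le b \le 10$; the worst case $b = 10$ gives an absolute bound $n < M_0$ with $M_0$ of size roughly $1.5 \times 10^{16}$, and of course we may assume $n > 350$ as in the proof of Theorem \ref{thm2}. Next I would linearize: from \eqref{eq 4.25} set
\[
\Gamma_{2} := l\log b - n\log\alpha + \log(b\pm 1),
\]
so that $\Lambda_{2} = e^{\Gamma_{2}} - 1$ and hence $|e^{\Gamma_{2}} - 1| < 3\alpha^{-n}$. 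Splitting into the cases $\Gamma_{2} > 0$ and $\Gamma_{2} < 0$ and applying Lemma \ref{lem 2.4}(a),(b) exactly as for $\Gamma_{1}$ (using $n > 350$ to guarantee $|e^{\Gamma_{2}} - 1| < 1/2$ in the second case) yields $0 < |\Gamma_{2}| < 6\alpha^{-n}$; dividing by $\log\alpha$ gives
\[
0 < \left| l\,\frac{\log b}{\log\alpha} - n + \frac{\log(b\pm 1)}{\log\alpha} \right| < C\,\alpha^{-n}
\]
for an explicit constant $C$ (about $21.3$, since $\alpha$ here is the same root of $x^3-x-1$ as in Section \ref{subsec2.3}).

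I would then apply Lemma \ref{lem 2.2} with $\tau := \log b/\log\alpha$, $\mu := \log(b\pm 1)/\log\alpha$, $A := C$, $B := \alpha$ and $M := M_0$. The number $\tau$ is irrational by the same automorphism argument ($\tau = s/t$ would force $b^{t} = \alpha^{s} \in \mathbb{Z}$, contradicted by $\sigma(\alpha) = \beta$). For each admissible $b$ and each choice of the first sign, I would compute the continued fraction of $\tau$, take a convergent $p_k/q_k$ with $q_k > 6M$, check that $\epsilon := \|\mu q_k\| - M\|\tau q_k\| > 0$, and conclude that any solution satisfies $n \le \log(Aq_k/\epsilon)/\log B$, a quantity well below $350$ --- contradicting $n > 350$. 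Note the second $\pm 1$ in \eqref{eq 1.2} does not enter $\Gamma_{2}$ (it was absorbed into the bounded term $\psi(n)\pm1$), so only the first sign and the value of $b$ matter in the reduction.

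There is one degenerate case requiring separate care, flagged by the remark after Lemma \ref{lem 2.2}: when $b = 2$ and the first sign is ``$-$'' (i.e. $E_n = 2^{l} \pm 1$), we have $b - 1 = 1$, so $\mu = 0$ and Lemma \ref{lem 2.2} does not apply. Here the inequality reads $0 < |l\tau - n| < C\alpha^{-n}$; since $l < n$ (the analogue of \eqref{eq 4.23}) and $\alpha^{n}$ dwarfs $n$, for $n$ not already small we get $|\tau - n/l| < C\alpha^{-n}/l < 1/(2l^{2})$, so $n/l$ is a convergent of $\tau$, and Legendre's criterion (Lemma \ref{lem 2.3}) with a suitable $M$ bounds $n$ by a small constant. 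Finally, it remains to check $0 \le n \le 350$, $1 \le l < n$, $2 \le b \le 10$ directly; a short \textit{Mathematica} computation returns exactly the lists displayed in the statement of Theorem \ref{thm6}.

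I do not expect a conceptual obstacle: the argument is structurally identical to Theorem \ref{thm5}. The real work is the bookkeeping over the (two first-sign $\times$ nine base) reduction instances --- in particular verifying $\epsilon > 0$ at the chosen convergent in every case, correctly peeling off the $\mu = 0$ branch for $b = 2$, and making sure the small-$n$ search is exhaustive so that no sporadic solution just below the reduced bound is missed.
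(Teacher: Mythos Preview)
Your proposal is correct and follows essentially the same route as the paper's proof: specialize Theorem~\ref{thm2} to obtain an absolute bound on $n$ (the paper gets $n<1.34\times10^{16}$, close to your $M_0$), linearize to $\Gamma_2$, apply Lemma~\ref{lem 2.4} to get $|\Gamma_2|<6\alpha^{-n}$, run Lemma~\ref{lem 2.2} with $\tau=\log b/\log\alpha$, $\mu=\log(b\pm1)/\log\alpha$, $A\approx21.34$, $B=\alpha$, handle the $b=2$, $\mu=0$ branch via Legendre's criterion (Lemma~\ref{lem 2.3}), and finish with a direct search for $n\le350$. Your identification of the degenerate case as precisely ``$b=2$, first sign minus'' is in fact sharper than the paper's wording, which simply says ``except for the case $b=2$'' before passing to the $\mu=0$ inequality.
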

\begin{proof}
Since $2 \leq b \leq 10$, from Theorem \ref{thm2} we obtain $n < 1.34 \times 10^{16}$. To reduce these bounds we need to apply Lemma \ref{lem 2.2}. Let us define
	\begin{equation}\label{eq 4.29}
		\Gamma_{2} := l \log b - n \log \alpha +\log (b\pm 1).  
	\end{equation}
Therefore, inequality \eqref{eq 4.25} can be rewritten as $|e^{\Gamma_{2}} -1 | < 3/ \alpha^{n}.$ In this case, $\Gamma_{2} \neq 0$. If $\Gamma_{2} > 0$, then we can apply Lemma \ref{lem 2.4} (a) to obtain $|\Gamma_{2}| < 3/\alpha^{n}$. Now, if $\Gamma_{2} < 0$, then  $|e^{\Gamma_{2}} - 1|< 1/2$. Thus, by Lemma \ref{lem 2.4} (b), we have that $|\Gamma_{2}| < 2 |e^{\Gamma_{2}} - 1 | < 6/\alpha^{n}$. So, in both cases we have
	\begin{equation}\label{eq 4.30}
		0 < |\Gamma_{2}| <  6\cdot \alpha^{-n}. 
	\end{equation}
Replacing \eqref{eq 4.30} by \eqref{eq 4.29} and dividing through by $\log \alpha$, we obtain
	\begin{equation}\label{eq 4.31}
		0<\left| l \tau -n + \mu \right|< A \cdot B^{-n}.  	
	\end{equation}
where	
	\[ \tau := \frac{\log b}{\log \alpha},  \quad \mu :=  \frac{\log (b\pm 1)}{\log \alpha}, \quad A:= 21.34 \quad \text{and} \quad B:= \alpha. 
	\]	
Now, we apply Lemma \ref{lem 2.2} to \eqref{eq 4.31} by taking $M := 1.34 \times 10^{16}$ which is an upper bound on $n$. For every $b$ such that $2 \leq b \leq 10$ we found that $q_{43}$ the denominator of the $43$-th convergent of $\tau$ exceeds $6M$. A quick computation with \textit{Mathematica} gives us that the inequality $\epsilon(b) :=||\mu q_{43}|| - M ||\tau q_{43}|| > 0$  except for the case $b=2$. Then, with the help of Mathematica, we can say that if the inequality \eqref{eq 4.31} has a solution for $3 \leq b \leq 10$, then 
\[
	n < \frac{\log ( A q_{43}/\epsilon)}{\log B} < 219.
	\]
On the other hand, in the case $b =2$; from \eqref{eq 4.31}, we have 
\[
0<\left| l \left(\frac{\log b}{\log \alpha} \right) - n \right| < 21.34 \cdot {\alpha^{-n}}. 
\]
If we divide this inequality by $l$, we get
\[
0<\left| \frac{\log b}{\log \alpha} - \frac{n}{l} \right| < \frac{21.34}{l \cdot \alpha^{n}}.
\]
For $n > 350$, it can be seen that 
\[
\frac{\alpha^{n}}{2(21.34)} > 1.7 \times 10^{41} > 1.34 \times 10^{16} > l,
\]
and so we have
\[
\left| \frac{\log b}{\log \alpha} - \frac{n}{l} \right| < \frac{21.34}{l \cdot \alpha^{n}} < \frac{1}{2l^{2}}.
\]
Therefore, from Lemma \ref{lem 2.3}, we conclude that the rational number $\frac{n}{l}$ is a convergent to $\tau = \frac{\log b}{\log \alpha}$ with $b=2$. Let 
		\[
		[a_{0}, a_{1}, a_{2},\dots] := [2, 2, 6, 1, 1, 1, 2, 1, 13, 3, 1, 1, 1, 1, 1, 8, 1, 3, 2, 2, 7, 1, 2, \
5, 1, 2 \dots]
		\]
		be the continued fraction expansion of $\tau$. Since $l < 1.34 \times 10^{16}$, we can apply Lemma \ref{lem 2.3} with $M := 1.34 \times 10^{16}$. A quick search using \textit{Mathematica} reveals that 
		\[
		q_{42} < 1.34 \times 10^{16} < q_{43}.
		\]
		Furthermore, $a(M):= \max\{a_{i} : i=0, 1, \dots,43\} = 80$. So by Lemma \ref{lem 2.3}, we have
		\[
        \frac{1}{(a(M)+2) \cdot l^{2}} \leq \left|\frac{\log b}{\log \alpha} - \frac{n}{l} \right| < \frac{21.34}{l \cdot \alpha^{n}}.
        \]
This yields to
\[
n < \frac{\log (21.34 \cdot (a(M)+2) \cdot 1.34 \times 10^{16})}{\log \alpha} < 159.
\]
Therefore, in all cases, we have $n < 219$. This is a contradiction to the fact that $n > 350$. Now, we search for the solutions to the Diophantine equation \eqref{eq 1.2} with 
\[
1 \leq l < n, \quad 0 \leq n \leq 350 \quad \text{and} \quad 2 \leq b \leq 10. 
\]
Using \textit{Mathematica}, we checked that all the solutions of the Diophantine equation \eqref{eq 1.2} are those listed in the statement of Theorem \ref{thm6}. This completes the proof.
\end{proof}

\section{Thabit and Williams numbers base $b$ which are Narayana's cows numbers}
In this section, we will prove Theorem \ref{thm3} and then give an illustration of this result for $2 \leq b \leq 10$.
\subsection{Proof of Theorem \ref{thm3}}
We start by assuming $n > 400$. By inequality \eqref{eq 2.16}, the equation \eqref{eq 1.3} implies that
\[
(b-1)b^l-1\leq (b\pm 1)b^l\pm 1 = N_n< \varphi^{n-1}
\]
which gives
\begin{equation}\label{eq 5.34}
b^l<(b -1)b^l\leq \varphi^{n-1}+1 < \varphi^{n+1}.
\end{equation}
Taking logarithms on both sides of inequalities \eqref{eq 5.34}, we obtain
\begin{equation}\label{eq 5.35}
l< n \quad \text{holds for all} \quad n > 400\quad \text{and} \quad b\geq 2.
\end{equation}
Using Binet's formula \eqref{eq 2.15}, equation \eqref{eq 1.3} can be rearranged as
\[
(b \pm 1) b^l \pm 1 = C_{\varphi} \varphi^{n} + e(n) 
\]
and consequently we have
\[
 (b\pm 1)b^l - C_{\varphi} \varphi^{n} =  e(n) \pm 1. 
\]
Dividing by $C_{\varphi} \varphi^{n}$ and taking absolute values, we obtain
\begin{equation}\label{eq 5.36}
|\Lambda_{3}| < \frac{3.4}{\varphi^n} \quad \text{where} \quad \Lambda_{3}=(b\pm 1)b^l C_{\varphi}^{-1 }\varphi^{-n}-1.
\end{equation}
We have $\Lambda_{3} \neq 0$, otherwise we would get
\[
(b\pm 1)b^l=C_{\varphi}\varphi^{n}
\]
If we conjugate the above relation with an automorphism $\sigma$ of the Galois extension of $\mathbb{Q}(\varphi )$ over $\mathbb{Q}$ given by $\sigma(\varphi)= \lambda$ and then taking absolute values, we obtain
	\[
	(b\pm 1)b^l = |C_{\lambda}| |\lambda|^{n}
	\]
which leads us to a contradiction as left-hand side exceeds $1$ for all $b \geq 2$, while its right-hand side is less than than 1. Thus, $\Lambda_{3} \neq 0$. Thus, we apply Theorem \ref{thm4} to $\Lambda_{3}$ given by \eqref{eq 5.36} with the parameters:
\[
		s:=3, \quad \quad (\eta_{1}, a_{1}) := \left((b\pm 1)C_{\varphi}^{-1 }, 1 \right), \quad \quad (\eta_{2}, a_{2}) := \left( b, l \right),~ \quad {\rm and} ~  \quad (\eta_{3}, a_{3}) := \left(\varphi, -n \right).
		\]
Then, we obtain
\[
h(\eta_{1}) \leq  h\left((b\pm 1)C_{\varphi}^{-1 }\right) < \log b+ \log 2+ \frac{\log 31}{3}, \quad h(\eta_{2}) = \log b \quad \text{and} \quad h(\eta_{3}) =\frac{\log  \varphi}{3}.
\]
It is clear that $\mathbb{L} := \mathbb{Q}(\varphi)$ contains $\eta_{1}, \eta_{2}, \eta_{3}$ and has degree $d_{\mathbb{L}} = [\mathbb{L}:\mathbb{Q}] = 3$. So, we take
\[
B_{1} := 3\log b+ 3\log 2+ \log 31, \quad B_{2} := 3\log b, \quad B_{3} := \log \varphi \quad \text{and} \quad D:= n.
\]
Applying Theorem \ref{thm4} and comparing the resulting inequality with \eqref{eq 5.36}, we obtain
\begin{equation}\label{eq 5.37}
n\log \varphi- \log 3.4 < 3.72 \times 10^{12} \log n \log b \log(248b^3)
\end{equation}
where we used the fact that inequality $1 + \log n < 1.2 \log n$ holds for $n > 400$. So, inequality \eqref{eq 5.37} becomes
\[
\frac{n}{\log n} < 9.76 \times 10^{12} \log b \log (248b^{3}).
\]
Thus, putting $S := 9.76 \times 10^{12} \log b \log (248b^3)$, $x = n$ and $m = 1$ in Lemma \ref{lem 2.5}, we obtain 
\[
n < 1.95 \times 10^{13} \log b \log (248 b^3) (29.91 + \log(\log b) + \log (\log 248b^3)).
\] 
This establishes and completes the proof of Theorem \ref{thm3}.
\subsection{Application for $2 \leq b \leq 10$}
In this subsection, we give all the solutions of the Diophantine equation \eqref{eq 1.3} for $l \geq 1$ and
$2 \leq b \leq 10$. Here is our result in this range. 
\begin{theorem}\label{thm7}
Let $b$ be a positive integer such that $2 \leq b \leq 10$. 
\begin{enumerate}
    \item Then, the solutions $(n, b, l)$ to the Diophantine equation $N_{n} = (b+1)b^l-1$ are in
    \[
    \left\{(9,4,1), (11,6,1)\right\}.
    \]
    More precisely, the Thabit numbers base $b$ of first kind which are Narayana's cows numbers are $19$ and $41$.
    \item Then, the solutions $(n, b, l)$ to the Diophantine equation $N_{n} = (b+1)b^l + 1$ are in
    \[
    \left\{(8,2,2), (8,3,1), (22,7,3)\right\}.
    \]
More precisely, the Thabit numbers base $b$ of second kind which are Narayana's cows numbers are $13$ and $2745$.
\item Then, the solutions $(n, b, l)$ to the Diophantine equation
$N_{n} = (b-1)b^l-1$ are in
\[
\left\{(0,2,1), (1,2,1), (2,2,1), (4,2,2), (9,5,1), (11,7,1)\right\}.
\]
More precisely, the Williams numbers base $b$ of second kind which are Narayana's cows numbers are $1$, $3$, $19$ and $41$.
\item Then, the solutions $(n, b, l)$ to the Diophantine equation
$N_{n} = (b-1)b^l+1$ are 
\[
\left\{(4,2,1), (7,2,3), (8,4,1), (9,3,2), (14,2,7)\right\}.
\]
More precisely, the Williams numbers base $b$ of second kind which are Narayana's cows numbers are $3$, $9$, $13$, $19$ and $129$.
\end{enumerate}
\end{theorem}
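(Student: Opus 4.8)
The plan is to mirror the two-stage argument used for Theorems~\ref{thm5} and~\ref{thm6}. Specializing Theorem~\ref{thm3} to the range $2\le b\le 10$ yields an explicit absolute bound $n<M$ with $M$ of the order of $10^{16}$. This is far too large for a direct search, so the first task is to bring it down to a few hundred using the Baker--Davenport reduction of Lemma~\ref{lem 2.2}, after which an exhaustive computer search finishes the proof.

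For the reduction, I would start from the inequality $|\Lambda_3|<3.4/\varphi^n$ established in \eqref{eq 5.36} and set
\[
\Gamma_3:=l\log b-n\log\varphi+\log\bigl((b\pm 1)C_\varphi^{-1}\bigr),
\]
so that $e^{\Gamma_3}-1=\Lambda_3$. Since $\Lambda_3\ne 0$ we get $\Gamma_3\ne 0$, and Lemma~\ref{lem 2.4}, applied separately in the cases $\Gamma_3>0$ and $\Gamma_3<0$ (using that $|e^{\Gamma_3}-1|<1/2$ forces $e^{|\Gamma_3|}<2$), gives $0<|\Gamma_3|<6.8\,\varphi^{-n}$. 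Dividing through by $\log\varphi$ yields an inequality of the form
\[
0<\bigl|\,l\tau-n+\mu\,\bigr|<A\,B^{-n},\qquad \tau:=\frac{\log b}{\log\varphi},\quad \mu:=\frac{\log\bigl((b\pm1)C_\varphi^{-1}\bigr)}{\log\varphi},\quad A\approx 18,\quad B:=\varphi.
\]
One checks $\tau\notin\mathbb{Q}$ exactly as in the paper: if $\tau=s/t$ with coprime positive integers $s,t$, then $b^t=\varphi^s\in\mathbb{Z}$, and applying the automorphism $\sigma$ of $\mathbb{Q}(\varphi)/\mathbb{Q}$ with $\sigma(\varphi)=\lambda$ gives $1<b^t=|\lambda|^s<1$, a contradiction.

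Next I would invoke Lemma~\ref{lem 2.2} with $M$ equal to the bound from Theorem~\ref{thm3}. For each $b\in\{2,\dots,10\}$ and each of the four sign patterns in \eqref{eq 1.3}, one selects a convergent $p_k/q_k$ of the continued fraction of $\tau$ with $q_k>6M$ and verifies (with \textit{Mathematica}) that $\epsilon(b):=||\mu q_k||-M||\tau q_k||>0$; Lemma~\ref{lem 2.2} then forces $n<\log(Aq_k/\epsilon)/\log\varphi$, a bound of order a few hundred, contradicting $n>400$. It is worth noting that, in contrast with the Perrin case of Theorem~\ref{thm6}, the quantity $\mu$ never degenerates to $0$ here, even when $b=2$ and the factor $b-1$ equals $1$: in that subcase $\mu=-\log C_\varphi/\log\varphi\ne 0$, since $C_\varphi$, being a root of $31x^3-3x-1$, is neither $1$ nor a power of $2$; hence $\mu$ is not a rational multiple of $\tau$ and Lemma~\ref{lem 2.2} applies directly. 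Should some instance produce $\epsilon(b)\le 0$, I would pass to the next convergent, or, as a fallback, run the Legendre-criterion argument of Lemma~\ref{lem 2.3} exactly as in the $b=2$ case of Theorem~\ref{thm6}.

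Finally, with the reduced bound on $n$ and the relation $l<n$ coming from the analogue of \eqref{eq 5.35}, a direct computer search over $2\le b\le 10$ (and the correspondingly bounded range of $l$) recovers precisely the solution lists in the statement. The main obstacle is the computational bookkeeping: Lemma~\ref{lem 2.2} must be run over all $9\times 4$ combinations of base and sign pattern, confirming in each that the chosen convergent satisfies $q_k>6M$ and $\epsilon(b)>0$ and keeping an eye on the near-degenerate $b=2$ subcases; once those finitely many verifications go through, the remainder of the argument is routine.
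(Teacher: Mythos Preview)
Your proposal is correct and follows essentially the same route as the paper: specialize Theorem~\ref{thm3} to obtain $n<1.85\times 10^{16}$, pass from $\Lambda_3$ to $\Gamma_3$ via Lemma~\ref{lem 2.4} to get $|\Gamma_3|<6.8\,\varphi^{-n}$, divide by $\log\varphi$ (the paper takes $A=17.79$), apply Lemma~\ref{lem 2.2} with a suitable convergent (the paper uses $q_{44}$ and finds $\epsilon(b)>0$ for every $2\le b\le 10$, giving $n<169$), and then search directly for $0\le n\le 400$. Your remark that the $b=2$ subcase does not degenerate here---because $\mu$ still contains the nonvanishing term $-\log C_\varphi/\log\varphi$---is a useful observation that the paper leaves implicit; indeed no appeal to Lemma~\ref{lem 2.3} is needed in this proof.
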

\begin{proof}
Since $2 \leq b \leq 10$, from Theorem \ref{thm3} we obtain $n < 1.85 \times 10^{16}$. To reduce these bounds we need to apply Lemma \ref{lem 2.2}. Let us define
	\begin{equation}\label{eq 5.39}
		\Gamma_{3} := l\log b- n \log\varphi+\log (b\pm 1) C_{\varphi}^{-1}.  
	\end{equation}
From the inequality \eqref{eq 5.36}, one can see that
	\begin{equation}\label{eq 5.40}
		0 < |\Gamma_{3}| <  6.8 \cdot \varphi^{-n}. 
	\end{equation}
	Inserting \eqref{eq 5.39} in \eqref{eq 5.40} and dividing both sides by $\log \varphi$, we get
	\begin{equation}\label{eq 5.41}
		0<\left| l\left(\frac{\log b}{\log \varphi} \right)-n + \frac{\log (b\pm 1)C_{\varphi}^{-1}}{\log \varphi}\right|<{17.79} \cdot {\varphi^{-n}}.  	
	\end{equation}
    In order to apply Lemma \ref{lem 2.2}, we set
	\[ \tau := \frac{\log b}{\log \varphi},  \quad \mu := \frac{\log (b\pm 1)C_{\varphi}^{-1}}{\log \varphi}, \quad A:= 17.79 \quad \text{and} \quad B:= \varphi. 
	\]
Note that  $\tau =  \frac{\log b}{\log \varphi} \notin \mathbb{Q}$. Now, we apply Lemma \ref{lem 2.2} with $M := 1.85\times 10^{16}$ which is the upper bound of $n$. 
For $2 \leq b \leq 10$,  we found that $q_{44}$ the denominator of the $44$-th convergent of $\tau$ exceeds $6M$ and $\epsilon(b) :=||\mu q_{44}|| - M ||\tau q_{44}|| > 0$. Then, with the help of \textit{Mathematica}, we can say that if the inequality \eqref{eq 5.41} has a solution, then
\[
	n < \frac{\log ( A q_{44}/\epsilon)}{\log B} < 169,
	\]
contradicting the fact that $n > 400$. Now, we search for
the solutions to the Diophantine equation \eqref{eq 1.3} with 
\[
1 \leq l < n, \quad 0 \leq n \leq 400 \quad \text{and} \quad 2 \leq b \leq 10. 
\]
Using \textit{Mathematica}, we checked that all the solutions of the Diophantine equation \eqref{eq 1.3} are those listed in the statement of Theorem \ref{thm7}. This completes the proof.
\end{proof}

\vspace{05mm} \noindent \footnotesize
	\begin{minipage}[b]{90cm}
		\large{Department of Mathematics,\\
			School of Applied Sciences, \\ 
			Kalinga Institute of Industrial Technology University, \\ 
			Bhubaneswar, 751024, Odisha, India. \\
            Email: bptbibhu@gmail.com \\
		Email: asutoshsatapathy95@gmail.com\\
        Email: utkal.duttafma@kiit.ac.in}
	\end{minipage}	
	
	\vspace{05mm} \noindent \footnotesize
	\begin{minipage}[b]{90cm}
		\large{P.G. Department of Mathematics,\\
			Government Women's College, Sundargarh,\\ 
			Sambalpur University, Odisha, India. \\
			Email: iiit.bijan@gmail.com}
	\end{minipage}	
	
\end{document}